\documentclass{amsart}
\usepackage[utf8]{inputenc}
\usepackage{amsmath,amsthm,amssymb,url}
\usepackage[margin = 2.5cm]{geometry}
\usepackage{stmaryrd}
\usepackage{tikz}
\setlength{\parindent}{.5cm}

\newtheorem{theorem}{Theorem}

\newtheorem{prop}[theorem]{Proposition}
\newtheorem{cor}[theorem]{Corollary}
\newtheorem{lemma}[theorem]{Lemma}
\newtheorem{conj}{Conjecture}
\newtheorem{question}{Question}
\theoremstyle{definition}
\newtheorem{alg}{Algorithm}
\newtheorem*{example}{Example}
\newtheorem*{remark}{Remark}

\newcommand{\ttt}{{\tilde{t}}}
\newcommand{\cat}{c}  %% Catalan numbers
\newcommand{\p}[1]{\mathcal #1}
\newcommand{\q}{q}   %% the numbers formerly called p_\lambda are now q_lambda
\newcommand{\s}{{\mathfrak S}}
\newcommand{\x}{{\bf x}}
\DeclareMathOperator{\id}{id}
\newcommand{\Z}{{\mathbb Z}}

%opening
\title{
On the enumeration of tanglegrams and tangled chains}

\author{Sara C.  Billey}%
\address{Department of Mathematics, University of Washington,
Seattle, WA 98195, USA}
\email{}%
\urladdr{http://www.math.washington.edu/~billey/}
\thanks{The first author was partially supported by the National Science Foundation grant DMS-1101017. The second author was supported by Research Program Z1-5434 and Research Project BI-US/14-15-026 of the Slovenian Research Agency. The third author was supported by National Science Foundation grant DMS-1223057.}

\author{Matja\v z Konvalinka}%
\address{Department of Mathematics,
University of Ljubljana,
Jadranska 21, Ljubljana, Slovenia}
\email{}%
\urladdr{http://www.fmf.uni-lj.si/~konvalinka/}
\thanks{}

\author{Frederick A. Matsen IV}%
\address{Computational Biology Program,
Fred Hutchinson Cancer Research Center,
Seattle, WA 98109, USA}
\email{}%
\urladdr{http://matsen.fredhutch.org/}
\thanks{}

\date{\today}
\subjclass[2010]{05A15 (Primary); 46N60, 05A16, 05A17, 05C05, 05C30 (Secondary)}

\begin{document}

\begin{abstract}
  Tanglegrams are a special class of graphs appearing in applications
concerning cospeciation and coevolution in biology and computer
science.  They are formed by identifying the leaves of two rooted
binary trees.  We give an explicit formula to count the number of
distinct binary rooted tanglegrams with $n$ matched
vertices, along with a simple asymptotic formula and an algorithm for
choosing a tanglegram uniformly at random.  The enumeration formula is
then extended to count the number of tangled chains of binary trees of
any length.  This includes a new formula for the number of binary
trees with $n$ leaves.  We also give a conjecture for the expected
number of cherries in a large randomly chosen binary tree and an
extension of this conjecture to other types of trees.
\end{abstract}

\maketitle

\section{Introduction}

Tanglegrams are graphs obtained by taking two binary rooted trees with
the same number of leaves and matching each leaf from the tree on the
left with a unique leaf from the tree on the right.  This construction
is used in the study of cospeciation and coevolution in biology.  For
example, the tree on the left may represent the phylogeny of a host,
such as gopher, while the tree on the right may represent a parasite,
such as louse \cite{Hafner1988-da}, \cite[page 71]{Page.2002}.  One important problem is to reconstruct
the historical associations between the phylogenies of host and
parasite under a model of parasites switching hosts, which is an
instance of the more general problem of \emph{cophylogeny estimation}.
See \cite{Page.2002,Pevzner.Shamir,Scornavacca:2011} for applications in biology.
Diaconis and Holmes have previously demonstrated how one can encode a phylogenetic
tree as a series of binary matchings \cite{Diaconis1998-gn}, which is a distinct
use of matchings from that discussed here.

In computer science, the Tanglegram Layout Problem (TL) is to find a
drawing of a tanglegram in the plane with the left and right trees
both given as planar embeddings with the smallest number of crossings
among (straight) edges matching the leaves of the left tree and the
right tree \cite{bukin.etal.2008}. These authors point out that tanglegrams occur in the analysis of software projects and clustering problems.

In this paper, we give the exact enumeration of tanglegrams with $n$
matched pairs of vertices, along with a simple asymptotic formula and an
algorithm for choosing a tanglegram uniformly at random.  We refer to
the number of matched vertices in a tanglegram as its \emph{size}.
Furthermore, two tanglegrams are considered to be
equivalent if one is obtained from the other by replacing the tree on
the left or the tree on the right by isomorphic trees.  For example,
in Figure~\ref{fig:t3}, the two  non-equivalent tanglegrams of size 3 are
shown.

\begin{figure}[h!]
\begin{center}
\begin{tikzpicture}[scale = .9]
\newcommand{\treea}[2]{\coordinate (v1) at (#1,0); \coordinate (v2) at (#1+#2,0.5);\coordinate (v3) at (#1+2*#2,1);\coordinate (v4) at (#1+2*#2,0);\coordinate (v5) at (#1+2*#2,-1);\draw[fill] (v1) circle (.5ex);\draw[fill] (v3) circle (.5ex);\draw[fill] (v4) circle (.5ex);\draw[fill] (v5) circle (.5ex);\draw (v1) -- (v3);\draw (v1) -- (v3);\draw (v2) -- (v4);\draw (v1) -- (v5);}
\newcommand{\tanglea}[5]{\treea{#1}{#2} \treea{#1+6*#2}{-#2} \draw[dashed] (#1 + 2*#2,1) -- (#1 + 4*#2,2 - #3); \draw[dashed] (#1 + 2*#2,0) -- (#1 + 4*#2,2 - #4); \draw[dashed] (#1 + 2*#2,-1) -- (#1 + 4*#2,2 - #5);}
\tanglea 0 {0.7} 1 2 3
\tanglea {5} {0.7} 1 3 2
\end{tikzpicture}
\end{center}
\caption{The tanglegrams of size $3$.}\label{fig:t3}
\end{figure}

We state our main results here postponing some definitions until Section~\ref{sec:background}. The following is our main theorem.
\begin{theorem}\label{thm:1}
 The number of tanglegrams of size $n$ is
 $$t_{n}= \sum_\lambda \frac{\prod_{i=2}^{\ell(\lambda)} \left(2(\lambda_i+\cdots+\lambda_{\ell(\lambda)})-1\right)^2}{z_\lambda},$$
 where the sum is over binary partitions of $n$ and $z_\lambda$ is defined by Equation \eqref{z}.
\end{theorem}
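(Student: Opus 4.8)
The plan is to count by Burnside's lemma. Fix the leaf set of both trees to be $[n]$ and the matching to be the identity; then a tanglegram of size $n$ is exactly an orbit of pairs $(T_L,T_R)$ of leaf-labelled (nonplane) rooted binary trees on $[n]$ under the diagonal relabelling action of the symmetric group $S_n$ (replacing the left or right tree by an isomorphic one corresponds to simultaneously relabelling both leaf sets, modulo automorphisms). Burnside's lemma then gives
$$ t_n=\frac{1}{n!}\sum_{\pi\in S_n}\#\{(T_L,T_R):\pi T_L=T_L,\ \pi T_R=T_R\}=\frac{1}{n!}\sum_{\pi\in S_n}b(\pi)^2, $$
where $b(\pi)$ is the number of leaf-labelled binary trees on $[n]$ fixed by $\pi$. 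Since $b(\pi)$ depends only on the cycle type $\lambda$ of $\pi$, and there are $n!/z_\lambda$ permutations of type $\lambda$, this becomes $t_n=\sum_\lambda b(\lambda)^2/z_\lambda$. Comparing with the statement, it suffices to prove that $b(\lambda)=0$ unless $\lambda$ is a binary partition, and that for binary $\lambda$ with parts in decreasing order
$$ b(\lambda)=\prod_{i=2}^{\ell(\lambda)}\bigl(2(\lambda_i+\cdots+\lambda_{\ell(\lambda)})-1\bigr)=\prod_{i=2}^{\ell(\lambda)}(2s_i-1),\qquad s_i:=\lambda_i+\cdots+\lambda_{\ell(\lambda)}. $$

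For the vanishing, I would use that the automorphism group of a rooted binary tree is an iterated wreath product of copies of the cyclic group $C_2$, one factor recording at each internal node whether its two subtrees are swapped, and is therefore a $2$-group. Any permutation $\pi$ fixing a given tree lies in that tree's automorphism group, so $\pi$ has order a power of $2$, and hence every cycle length of $\pi$ is a power of $2$. Thus $b(\lambda)=0$ unless every part of $\lambda$ is a power of $2$, which is precisely the restriction of the summation to binary partitions.

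It then remains to compute $b(\lambda)$ for binary $\lambda$, and I would do this by an insertion recursion that matches the product factor by factor. Build a $\pi$-invariant tree by adding the cycles of $\pi$ one at a time in order of increasing length. Beginning with the smallest cycle $c_{\ell(\lambda)}$, an easy induction on the root split shows there is a unique $\pi$-invariant tree on its $\lambda_{\ell(\lambda)}$ leaves, namely the complete binary tree on which the cycle acts by rotation; this is the base case and explains the absence of an $i=1$ factor. The engine is the following \emph{insertion lemma}: if $T$ is a $\pi$-invariant tree on a leaf set of size $p$, all of whose cycles have length dividing $m$, and $c$ is a further cycle of length $m=2^a$, then the number of $\pi$-invariant binary trees on the enlarged leaf set that restrict to $T$ equals $2p-1$. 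Granting this, inserting $c_{i-1}$ into the current tree — which has $s_i$ leaves and, by the chosen order, only cycles of length at most $\lambda_{i-1}$, hence of lengths dividing $\lambda_{i-1}$ — contributes the factor $2s_i-1$, and the product over $i=2,\dots,\ell(\lambda)$ is exactly the claimed value of $b(\lambda)$.

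To prove the lemma, observe that $T$ has exactly $2p-1$ grafting slots: the $2p-2$ edges together with a new edge above the root. I would place the first element $x_0$ of $c$ at a chosen slot $\sigma$ and let $\pi$-equivariance force the rest, $x_k=\pi^k(x_0)$ being grafted at slot $\pi^k(\sigma)$. The divisibility hypothesis makes $\pi^m$ act as the identity on the leaves of $T$, hence fix every slot, so the assignment is consistent; when the $\pi$-orbit of $\sigma$ has size $d<m$, the $m/d$ elements of $c$ landing at $\sigma$ bundle into the unique $\langle\pi^d\rangle$-invariant subtree supplied by the base case. The main obstacle is exactly this lemma: one must verify that the slot-to-tree assignment is a bijection onto the $\pi$-invariant trees restricting to $T$ — that the bundling is forced and unique, that distinct slots give distinct trees, and that the ordering hypothesis (insert the shortest cycles first, so the cycle being added is the longest present) leaves no slot unusable, which is what produces the clean factor $2p-1$ rather than a count depending on the finer orbit structure. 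As consistency checks, the formula yields $b((2^a))=1$ and $b(1^n)=(2n-3)!!$, the latter recovering the classical number of leaf-labelled binary trees, as it must since the identity fixes every tree.
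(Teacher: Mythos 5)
Your proposal is correct in substance and, in its second half, takes a genuinely different route from the paper. The first half is a reformulation rather than a new idea: identifying tanglegrams with orbits of pairs of leaf-labelled trees under the diagonal $\s_n$-action and applying Burnside's lemma is the same computation as the paper's double-coset count, since the number of labelled trees of shape $T$ fixed by a permutation of type $\lambda$ is $z_\lambda|A(T)_\lambda|/|A(T)|$, so your $b(\lambda)^2/z_\lambda$ is literally the paper's $z_\lambda\bigl(\sum_T |A(T)_\lambda|/|A(T)|\bigr)^2$. The real divergence is in how the product formula for $b(\lambda)=\prod_{i\ge 2}(2s_i-1)$ is established. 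The paper proves the equivalent Proposition \ref{prop} by induction on $n$ via the root decomposition $T=(T_1,T_2)$, which forces the case analysis $T_1>T_2$ versus $T_1=T_2$ and reduces everything to the polynomial identity of Lemma \ref{lemma}; that route has the side benefit of producing the recursion \eqref{pind} that drives the random-generation algorithms and the recurrence of Section \ref{sec:recurrence}. Your insertion argument instead explains the product factor by factor: adding the cycles in increasing order of length, each new cycle of length $m$ grafts onto one of the $2p-1$ slots of the current $p$-leaf invariant tree, with equivariance and the uniqueness of the invariant tree on a single $2^a$-cycle forcing everything else. This is more bijective and transparent, and it yields Theorem \ref{thm:trees} for free via $b_n=\sum_\lambda b(\lambda)/z_\lambda$. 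The one place your write-up stops short is the surjectivity/uniqueness of the slot-to-tree correspondence, which you correctly flag as the main obligation; it does hold, and the clean way to close it is to view $T$ as the minimal spanning subtree of the old leaves inside the new tree and observe that the $c$-leaves hanging along the edges of a single $\pi$-orbit of slots must form, at each such edge $e$ with orbit size $d$, exactly one pendant subtree carrying one $\langle\pi^d\rangle$-orbit of $c$ (any finer splitting along the edge would be cyclically permuted while preserving the linear order, which is impossible). With that paragraph supplied, your proof is complete and is a legitimate alternative to the paper's Lemma \ref{lemma}.
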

The first 10
terms of the sequence $t_n$ starting at $n=1$ are
$$
1,1,2,13,114,1509,25595,535753,13305590,382728552,
$$
see
\cite[A258620]{oeis} for more terms.

\begin{example}
 The binary partitions of $n=4$ are $(4)$, $(2,2)$, $(2,1,1)$ and $(1,1,1,1)$, so
 $$t_4 = \frac{1}{4} + \frac{3^2}{8} + \frac{3^2 \cdot 1^2}{4} + \frac{5^2 \cdot 3^2 \cdot 1^2}{24} = 13$$
as shown in Figure~\ref{fig:t4}.
It takes a computer only a moment to compute
 $$t_{42} = 33889136420378480492869677415186948305278176263020722832251621520063757$$
 and under a minute to compute all 3160 integer digits of $t_{1000}$ using a recurrence based on Theorem~\ref{thm:1} given in Section~\ref{sec:recurrence}.
\end{example}

\begin{figure}[h!]
\begin{center}
\begin{tikzpicture}[scale = 0.4]
\newcommand{\treeb}[3]{\coordinate (v1) at (#1,#3); \coordinate (v2) at (#1+#2,#3+0.5);\coordinate (v3) at (#1+2*#2,#3+1);\coordinate (v4) at (#1+3*#2,#3+1.5);\coordinate (v5) at (#1+3*#2,#3+0.5);\coordinate (v6) at (#1+3*#2,#3-0.5);\coordinate (v7) at (#1+3*#2,#3-1.5);\draw[fill] (v1) circle (.5ex);\draw[fill] (v4) circle (.5ex);\draw[fill] (v5) circle (.5ex);\draw[fill] (v6) circle (.5ex);\draw[fill] (v7) circle (.5ex);\draw (v1) -- (v4);\draw (v3) -- (v5);\draw (v2) -- (v6);\draw (v1) -- (v7);}
\newcommand{\treec}[3]{\coordinate (v1) at (#1,#3); \coordinate (v2) at (#1+2*#2,#3-1);\coordinate (v3) at (#1+2*#2,#3+1);\coordinate (v4) at (#1+3*#2,#3+1.5);\coordinate (v5) at (#1+3*#2,#3+0.5);\coordinate (v6) at (#1+3*#2,#3-0.5);\coordinate (v7) at (#1+3*#2,#3-1.5);\draw[fill] (v1) circle (.5ex);\draw[fill] (v4) circle (.5ex);\draw[fill] (v5) circle (.5ex);\draw[fill] (v6) circle (.5ex);\draw[fill] (v7) circle (.5ex);\draw (v1) -- (v4);\draw (v3) -- (v5);\draw (v2) -- (v6);\draw (v1) -- (v7);}
\newcommand{\tangleb}[7]{\treeb{#1}{#2}{#7} \treeb{#1+8*#2}{-#2}{#7} \draw[dashed] (#1 + 3*#2,#7+1.5) -- (#1 + 5*#2,#7+2.5-#3); \draw[dashed] (#1 + 3*#2,#7+0.5) -- (#1 + 5*#2,#7+2.5-#4); \draw[dashed] (#1 + 3*#2,#7-0.5) -- (#1 + 5*#2,#7+2.5-#5); \draw[dashed] (#1 + 3*#2,#7-1.5) -- (#1 + 5*#2,#7+2.5-#6);}
\newcommand{\tanglec}[7]{\treeb{#1}{#2}{#7} \treec{#1+8*#2}{-#2}{#7} \draw[dashed] (#1 + 3*#2,#7+1.5) -- (#1 + 5*#2,#7+2.5-#3); \draw[dashed] (#1 + 3*#2,#7+0.5) -- (#1 + 5*#2,#7+2.5-#4); \draw[dashed] (#1 + 3*#2,#7-0.5) -- (#1 + 5*#2,#7+2.5-#5); \draw[dashed] (#1 + 3*#2,#7-1.5) -- (#1 + 5*#2,#7+2.5-#6);}
\newcommand{\tangled}[7]{\treec{#1}{#2}{#7} \treeb{#1+8*#2}{-#2}{#7} \draw[dashed] (#1 + 3*#2,#7+1.5) -- (#1 + 5*#2,#7+2.5-#3); \draw[dashed] (#1 + 3*#2,#7+0.5) -- (#1 + 5*#2,#7+2.5-#4); \draw[dashed] (#1 + 3*#2,#7-0.5) -- (#1 + 5*#2,#7+2.5-#5); \draw[dashed] (#1 + 3*#2,#7-1.5) -- (#1 + 5*#2,#7+2.5-#6);}
\newcommand{\tanglee}[7]{\treec{#1}{#2}{#7} \treec{#1+8*#2}{-#2}{#7} \draw[dashed] (#1 + 3*#2,#7+1.5) -- (#1 + 5*#2,#7+2.5-#3); \draw[dashed] (#1 + 3*#2,#7+0.5) -- (#1 + 5*#2,#7+2.5-#4); \draw[dashed] (#1 + 3*#2,#7-0.5) -- (#1 + 5*#2,#7+2.5-#5); \draw[dashed] (#1 + 3*#2,#7-1.5) -- (#1 + 5*#2,#7+2.5-#6);}
\tangleb {0} {0.7} 1 2 3 4 {0}
\tangleb {7} {0.7} 1 2 4 3 {0}
\tangleb {14} {0.7} 1 3 2 4 {0}
\tangleb {21} {0.7} 1 3 4 2 {0}
\tangleb {28} {0.7} 1 4 2 3 {0}
\tangleb {0} {0.7} 1 4 3 2 {-5}
\tangleb {7} {0.7} 3 4 1 2 {-5}
\tanglec {14} {0.7} 1 2 3 4 {-5}
\tanglec {21} {0.7} 1 3 2 4 {-5}
\tangled {28} {0.7} 1 2 3 4 {-5}
\tangled {7} {0.7} 1 3 2 4 {-10}
\tanglee {14} {0.7} 1 2 3 4 {-10}
\tanglee {21} {0.7} 1 3 2 4 {-10}
\end{tikzpicture}
\end{center}
\caption{The $13$ tanglegrams of size $4$.}\label{fig:t4}
\end{figure}
\medskip

We use the main theorem to study the asymptotics
of the sequence $t_n$. It turns out that
$$\frac{t_n}{n!} \sim \frac{e^{\frac 1 8} 4^{n-1}}{\pi n^3},$$
see Corollary \ref{asymptotic} for an explanation and better estimates.

\medskip

A side result of the proof is a new formula for the number of
inequivalent binary trees, called the Wedderburn-Etherington numbers \cite[A001190]{oeis}.

\begin{theorem} \label{thm:trees}
 The number of inequivalent binary trees with $n$ leaves is
 $$b_n = \sum_\lambda \frac{\prod_{i=2}^{\ell(\lambda)} (2(\lambda_i+\cdots+\lambda_{\ell(\lambda)})-1)}{z_\lambda},$$
 where the sum is over binary partitions of $n$.
\end{theorem}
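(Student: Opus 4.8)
The plan is to prove the Wedderburn-Etherington formula for $b_n$ by the same representation-theoretic/group-action machinery used for Theorem~\ref{thm:1}, but without the squaring that distinguishes tanglegrams (two trees) from single trees (one tree). Let me think about what that machinery must be.

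A binary tree with $n$ leaves is a tanglegram "with one side." The tanglegram count $t_n$ has the factor $(2(\lambda_i+\cdots)-1)^2$ squared, and $b_n$ has the same factor to the first power. This strongly suggests that both counts come from a single underlying computation, where the tanglegram involves a pair and the tree involves a single object.

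Let me reconstruct the likely combinatorial setup. A binary tree with $n$ leaves, or rather the "plane" binary trees (where left/right matter and leaves are labeled), are counted by $(2n-3)!! = 1\cdot 3\cdots(2n-3)$ — the number of ways to build a binary tree by successively adding leaves. Actually the number of leaf-labeled binary trees (phylogenetic trees) on $n$ leaves is $(2n-3)!!$. The unlabeled count $b_n$ (Wedderburn-Etherington) is obtained by quotienting by the symmetric group $S_n$ acting on leaf labels, via Burnside's lemma / cycle-index. The factors $(2m-1)$ appearing are exactly double factorials.

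So the approach: unlabeled binary trees = orbits of $S_n$ acting on leaf-labeled binary trees. By Burnside, $b_n = \frac{1}{n!}\sum_{\sigma} \mathrm{fix}(\sigma)$, where $\mathrm{fix}(\sigma)$ counts leaf-labeled binary trees invariant under relabeling by $\sigma$. The number of fixed trees depends only on the cycle type of $\sigma$; conjugacy classes correspond to partitions $\lambda \vdash n$, with $n!/z_\lambda$ elements each. So $b_n = \sum_\lambda \frac{1}{z_\lambda}\mathrm{fix}(\lambda)$, matching the denominator exactly. The work is computing $\mathrm{fix}(\lambda)$.

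The key claim is then $\mathrm{fix}(\lambda) = \prod_{i=2}^{\ell(\lambda)}(2(\lambda_i+\cdots+\lambda_{\ell})-1)$ when $\lambda$ is a binary partition, and $\mathrm{fix}(\lambda)=0$ otherwise. Intuitively, a tree fixed by a permutation of cycle type $\lambda$ must have an automorphism of that cycle structure; the symmetries of a binary tree are generated by swapping the two children at internal nodes, forcing the cycle lengths to be powers of $2$ — hence binary partitions. Building such a symmetric tree amounts to successively attaching the "blocks" (cycles, ordered by decreasing relevance) and the double-factorial factor counts the ways to insert each new block's subtree into the growing symmetric structure. The main obstacle, and the heart of the matter, is establishing this product formula for the fixed-point count: one must set up an inductive construction of $\sigma$-invariant binary trees that attaches parts of $\lambda$ from smallest to largest (or peels them off), shows only binary partitions survive, and verifies that at each stage there are exactly $2(\lambda_i+\cdots+\lambda_\ell)-1$ choices.

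I would carry this out as follows. First I would fix the model: leaf-labeled (non-plane, i.e. unordered-children) binary trees on label set $[n]$, so their total count is $(2n-3)!!=\prod_{i=2}^{n}(2i-3)$. Then I would apply Burnside to the $S_n$-action by relabeling, reducing to fixed-point counts over conjugacy classes and producing the $1/z_\lambda$ weights. Next I would analyze when $\sigma$ of cycle type $\lambda$ fixes a tree $T$: the induced automorphism permutes subtrees, and compatibility with the binary branching forces each cycle to pair up within the automorphism orbit structure, which constrains $\lambda$ to be a binary partition (and gives $0$ otherwise). Finally, for binary $\lambda$ I would count invariant trees by an insertion recursion: order the parts and show each successive part contributes a factor $2(\lambda_i+\cdots+\lambda_\ell)-1$ — exactly the double-factorial-style term — by counting the edges available for attaching the next symmetric block to the current invariant tree. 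Because tanglegrams are pairs of trees sharing the leaf matching, the same fixed-point count appears twice and multiplies, which is precisely why Theorem~\ref{thm:1} squares this factor while Theorem~\ref{thm:trees} does not; so I would emphasize that Theorem~\ref{thm:trees} is the ``one-sided'' specialization of the argument already established for $t_n$, and the only real content is the single fixed-point product, the hardest part being the verification that the insertion count is exactly $2(\lambda_i+\cdots+\lambda_\ell)-1$ at each step.
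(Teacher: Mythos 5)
Your Burnside reformulation is sound and is in fact equivalent to the paper's setup: writing $\mathrm{fix}(\lambda)$ for the number of leaf-labeled binary trees fixed by a permutation of cycle type $\lambda$, one has $\mathrm{fix}(\lambda)=z_\lambda\sum_{T\in B_n}|A(T)_\lambda|/|A(T)|$, so your key claim is precisely Proposition~\ref{prop}, from which the paper deduces Theorem~\ref{thm:trees} in one line. The genuine gap is that you never prove this claim, and the method you sketch for it fails. You propose to build the $\sigma$-invariant trees by inserting the cycles of $\sigma$ one block at a time, ``counting the edges available for attaching the next symmetric block to the current invariant tree.'' But an invariant tree need not decompose as a smaller invariant tree with a grafted block: when two cycles have the same length, their elements can be interleaved across a symmetry of the tree. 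Already for $\lambda=(2,2)$ and $\sigma=(1\,2)(3\,4)$ the formula (and the truth) is $3$: the three balanced trees with cherry pairs $\{1,2\},\{3,4\}$, then $\{1,3\},\{2,4\}$, then $\{1,4\},\{2,3\}$. Only the first arises by attaching a $\{1,2\}$-cherry to a $(3\,4)$-invariant tree on $\{3,4\}$; the other two are fixed only because $\sigma$ swaps the two isomorphic subtrees, and grafting the cherry $\{1,2\}$ onto either of the remaining edge positions of the cherry $\{3,4\}$ destroys invariance. So the naive insertion count gives $1$, not $3$.

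This interleaving phenomenon is exactly what the paper's proof of Proposition~\ref{prop} is engineered to handle: it recurses on the root decomposition, splitting into the case $T_1\neq T_2$ (where $\lambda$ splits as $\lambda^1\cup\lambda^2$ between the subtrees) and the case $T_1=T_2$ (where an element of type $\lambda$ can arise as $\pi w_1\pi w_1^{-1}\pi w_2$ with $w_2$ of type $\lambda/2$, the cycles interleaving the two subtrees), and it then verifies the resulting recurrence \eqref{pind}, $2q_\lambda=q_{\lambda/2}+\sum q_{\lambda^1}q_{\lambda^2}$, via the polynomial identity of Lemma~\ref{lemma}. Nothing in your proposal supplies a substitute for the $\lambda/2$ term or for that identity, even though in the $(2,2)$ example that term accounts for two of the three fixed trees. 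Until you give a correct construction realizing the factor $2(\lambda_i+\cdots+\lambda_{\ell(\lambda)})-1$ --- which cannot be a pure edge count on a partially built invariant tree --- the proof is incomplete at precisely the step you yourself identify as the heart of the matter.
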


A \emph{tangled chain} is an ordered sequence of $k$ binary trees with matchings between neighboring trees in the sequence. For $k = 1$, these are inequivalent binary trees, and for $k = 2$, these are tanglegrams, so the following generalizes Theorems~\ref{thm:1} and \ref{thm:trees}.

\begin{figure}[h!]
\begin{center}
\begin{tikzpicture}[scale = 0.45]
\newcommand{\treea}[3]{\coordinate (v1) at (#3+0,#1); \coordinate (v2) at (#3+0.5,#1+#2);\coordinate (v3) at (#3+1,#1+2*#2);\coordinate (v4) at (#3+0,#1+2*#2);\coordinate (v5) at (#3+-1,#1+2*#2);\draw[fill] (v1) circle (.5ex);\draw[fill] (v3) circle (.5ex);\draw[fill] (v4) circle (.5ex);\draw[fill] (v5) circle (.5ex);\draw (v1) -- (v3);\draw (v1) -- (v3);\draw (v2) -- (v4);\draw (v1) -- (v5);}
\newcommand{\tanglef}[8]{\treea #2 {-0.7} {#1} \treea {#2-4} {0.7} {#1+2} \treea {#2} {-0.7} {#1+4} \draw[dashed] (#1+1,#2-2*0.7) -- (#1+2+1-#3+1,#2-4+2*0.7); \draw[dashed] (#1,#2-2*0.7) -- (#1+2+1-#4+1,#2-4+2*0.7); \draw[dashed] (#1-1,#2-2*0.7) -- (#1+2+1-#5+1,#2-4+2*0.7); \draw[dashed] (#1+2+1,#2-4+2*0.7) -- (#1+4+2-#6,#2-2*0.7); \draw[dashed] (#1+1+1,#2-4+2*0.7) -- (#1+4+2-#7,#2-2*0.7); \draw[dashed] (#1+1,#2-4+2*0.7) -- (#1+4+2-#8,#2-2*0.7);}
\tanglef 0 0 1 2 3 1 2 3
\tanglef 7 0 1 2 3 1 3 2
\tanglef {14} 0 1 3 2 1 2 3
\tanglef {21} 0 1 3 2 1 3 2
\tanglef {28} 0 1 3 2 3 1 2
\end{tikzpicture}
\end{center}
\caption{The tangled chains of length $3$ for $n = 3$.}\label{fig:tchains3}
\end{figure}

In terms of computational biology, tangled chains of length $k$ formalize the essential input to a variety of problems on $k$ leaf-labeled (phylogenetic) trees (e.g.~\cite{Whidden2014-yt}).

\begin{theorem}
 The number of ordered tangled chains of length $k$ for $n$ is
 $$\sum_\lambda \frac{\prod_{i=2}^{\ell(\lambda)} \left(2(\lambda_i+\cdots+\lambda_{\ell(\lambda)})-1\right)^k}{z_\lambda},$$
 where the sum is over binary partitions of $n$.
 \label{thm:tchains}
\end{theorem}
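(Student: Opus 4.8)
The plan is to enumerate tangled chains using the same character-theoretic or group-action framework that must underlie Theorems~\ref{thm:1} and~\ref{thm:trees}, since those are the $k=2$ and $k=1$ cases and the formula for general $k$ differs only by changing an exponent from $2$ (resp.\ $1$) to $k$. The appearance of $z_\lambda$ summed over binary partitions, together with squared (resp.\ unsquared) factors, strongly suggests a Burnside/Cauchy--Frobenius count: we are counting orbits of some set of labeled configurations under the action of a group whose conjugacy-class data is indexed by binary partitions. The natural candidate is the \emph{automorphism group of a binary tree} acting on leaf-labelings, or equivalently the wreath-product structure that arises when one builds binary trees recursively. The recurring quantity $2(\lambda_i+\cdots+\lambda_{\ell(\lambda)})-1$ should be the number of ways to attach a new cherry or the number of edges available at a given recursive stage, and the product over $i\ge 2$ reflects the recursive construction of a binary tree on $n$ leaves via successive leaf additions.

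Concretely, I would first recall (or reconstruct from the earlier proofs) the bijective/representation-theoretic setup in which an \emph{ordered} tangled chain of length $k$ is a tuple $(T_1,\ldots,T_k)$ of binary trees on a common leaf set of size $n$, taken up to relabeling of the leaves, i.e.\ up to the diagonal action of the symmetric group $\s_n$ on all $k$ trees simultaneously. By Burnside's lemma, the number of such orbits equals
$$
\frac{1}{n!}\sum_{\sigma\in\s_n} \bigl(\mathrm{fix}(\sigma)\bigr)^k,
$$
where $\mathrm{fix}(\sigma)$ is the number of binary trees on $\{1,\ldots,n\}$ fixed by the permutation $\sigma$ acting on leaves. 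The key structural fact, which I expect to be established in the proofs of Theorems~\ref{thm:1} and~\ref{thm:trees}, is that $\mathrm{fix}(\sigma)$ depends only on the cycle type of $\sigma$, is nonzero precisely when that cycle type is a binary partition $\lambda$, and in that case equals $\prod_{i=2}^{\ell(\lambda)}\bigl(2(\lambda_i+\cdots+\lambda_{\ell(\lambda)})-1\bigr)$. Grouping the Burnside sum by cycle type then gives exactly the claimed formula, since $\lambda$ ranges over binary partitions and the number of permutations of cycle type $\lambda$ is $n!/z_\lambda$; the factor $n!$ cancels, leaving $\sum_\lambda \mathrm{fix}(\lambda)^k/z_\lambda$ with the $k$-th power coming from the $k$ independent tree choices.

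The heart of the argument, and the step I expect to be the main obstacle, is proving the explicit value of $\mathrm{fix}(\sigma)$ for $\sigma$ of binary cycle type $\lambda$ and showing it vanishes otherwise. A permutation $\sigma$ fixes an (unordered) binary tree $T$ when there is an automorphism of $T$ realizing $\sigma$ on the leaves; since any nontrivial automorphism of a binary tree is generated by swaps of sibling subtrees, the cycle lengths appearing must be powers of $2$, forcing $\lambda$ to be binary. To count the fixed trees, I would argue recursively on the tree structure: a $\sigma$-fixed tree is determined by a $\sigma$-invariant way of pairing off and nesting the cycles, and at each stage of the recursion the factor $2(\lambda_i+\cdots+\lambda_{\ell(\lambda)})-1$ counts the admissible placements (e.g.\ the double-factorial-type count of matchings on the remaining parts, reflecting how a new largest part can be inserted). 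Once this per-cycle-type count is pinned down, the passage to all three theorems is uniform: Theorem~\ref{thm:trees} is the $k=1$ identity $b_n=\frac{1}{n!}\sum_\sigma\mathrm{fix}(\sigma)$, Theorem~\ref{thm:1} is $k=2$, and the general statement is immediate. I would therefore spend most of the write-up justifying the formula for $\mathrm{fix}(\sigma)$ and its dependence on cycle type alone, after which the Burnside computation and regrouping by $z_\lambda$ are routine.
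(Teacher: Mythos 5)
Your argument is correct, and it reaches the theorem by a genuinely different route from the one in the paper. The paper fixes an ordered tuple of tree shapes $(T_1,\ldots,T_k)$, counts the resulting ``multicosets'' of $\s_n$ by an induction on $k$ (each multicoset has size $\prod_i |A(T_i)|$ divided by the order of a stabilizer that is an intersection of conjugated automorphism groups), arrives at the refined count \eqref{cosetsize.2} for chains with those prescribed shapes, and only then sums over shapes and invokes Proposition~\ref{prop}. You instead realize a tangled chain as an orbit of a $k$-tuple of leaf-labeled binary trees under the diagonal action of $\s_n$ and apply Burnside once; the exponent $k$ appears immediately from the $k$ independent coordinates and no induction on $k$ is needed. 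The two routes converge on the same key fact: for $\sigma$ of binary cycle type $\lambda$ one has $\mathrm{fix}(\sigma)=z_\lambda\sum_{T\in B_n}|A(T)_\lambda|/|A(T)|$, so your claimed value $\prod_{i=2}^{\ell(\lambda)}\bigl(2(\lambda_i+\cdots+\lambda_{\ell(\lambda)})-1\bigr)$, and the vanishing for non-binary cycle types, are exactly Proposition~\ref{prop}, which the paper proves via the polynomial recurrence of Lemma~\ref{lemma}. What your version buys is brevity and a transparent explanation of the $k$-th power; what the paper's version buys is the intermediate formula \eqref{cosetsize.2} for a fixed tuple of trees, which is what drives the uniform random generation algorithms of Section~\ref{sec:random}. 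Two points to tighten before this stands on its own: (i) verify explicitly that your orbit description agrees with the paper's multicoset definition of a tangled chain --- sending a tuple of labeled trees $(v_1A(T_1),\ldots,v_kA(T_k))$ to the relative positions $w_i=v_i^{-1}v_{i+1}$ identifies diagonal $\s_n$-orbits with equivalence classes under $w_i\mapsto t_iw_it_{i+1}$, $t_i\in A(T_i)$; (ii) your recursive sketch of the evaluation of $\mathrm{fix}(\sigma)$ is too vague to replace Proposition~\ref{prop}, so you should cite that proposition (as the paper itself does for Theorem~\ref{thm:tchains}) rather than attempt to re-derive it.
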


\begin{example}
 For $n = k = 3$, we have partitions $(2,1)$ and $(1,1,1)$, and the theorem gives
 $$\frac{1^3}{2} + \frac{3^3 \cdot 1^3}{6} = 5,$$ as shown in Figure
 \ref{fig:tchains3}.  For $k=3$, the number of tangled chains on trees
 with $n$ leaves gives rise to the sequence starting
$$
 1,1,5,151,9944,1196991,226435150,61992679960,23198439767669,11380100883484302.
$$
See \cite[A258486]{oeis} for more terms.
\end{example}

From the enumerative point of view, it is also quite natural to ask
how likely a particular tree $T$ is to appear on one side or the other
of a uniformly selected tanglegram.  In Section~\ref{sec:remarks}, we give a simple
explicit conjecture for the asymptotic growth of the expected number
of copies of $T$ on one side of a tanglegram as a function of $T$ and
the size of the tanglegram.  For example, the cherries of a binary
tree are pairs of leaves connected by a common parent.
We conjecture that the expected number of cherries in one of the binary trees
of a tanglegram of size $n$ chosen in the uniform distribution is $n/4$.

Further discussion of the applications of tanglegrams along with
several variations on the theme are described in \cite{arniePaper}.
In particular, tanglegrams can be used to compute the
subtree-prune-regraft distance between two binary trees.

The paper proceeds as follows.  In Section~\ref{sec:background}, we
define our terminology and state the main theorems.  We prove the main
theorems in Section~\ref{proof}. Section~\ref{sec:random} contains an
algorithm to choose a tanglegram uniformly at random for a given
$n$. In Section~\ref{sec:asymptotics}, we give several asymptotic
approximations to the number of tanglegrams with increasing accuracy
and complexity. In Section~\ref{sec:recurrence}, we give a recursive
formula for both the number of tanglegrams and for tangled chains. We
conclude with several open problems and conjectures in
Section~\ref{sec:remarks}.

\section{Background}\label{sec:background}

In this section, we recall some vocabulary and notation on partitions and
trees.  This terminology can also be found in standard textbooks
on combinatorics such as \cite{ec1}.  We use these terms to give the
formal definition of tanglegrams and the notation used in the main
theorems.

A \emph{partition} $\lambda=(\lambda_1,\lambda_2,\ldots, \lambda_k)$
is a weakly decreasing sequence of positive integers.  The length
$\ell(\lambda)$ of a partition is the number of entries in the
sequence, and $|\lambda|$ denotes the sum of the entries of $\lambda$.
We say $\lambda$ is a \emph{binary partition} if all its parts are
equal to a nonnegative power of $2$.  Binary partitions have
appeared in a variety of contexts, see for instance
in \cite{Knuth.1966,Konvalinka.Pak.2014,Sloane.Sellers} and
\cite[A000123]{oeis}. When writing partitions, we sometimes omit parentheses and commas.

If $\lambda$ is a nonempty binary partition with $m_i$
occurrences of the letter $2^i$ for each $i$, we also denote $\lambda$ by
$(1^{m_0},2^{m_1},4^{m_2},8^{m_3}, \ldots, (2^j)^{m_j})$ where $2^j=\lambda_1$ is the maximum value
in $\lambda$.  Given $\lambda = (1^{m_0},2^{m_1}, \ldots, (2^j)^{m_j})$, let
$z_\lambda$ denote the product
\begin{equation} \label{z}
z_\lambda = 1^{m_0}2^{m_1} \cdots (2^j)^{m_j}m_0! m_1! m_2! \cdots m_j!.
\end{equation}
The numbers $z_\lambda$ are well known since the number of
permutations in $\s_n$ with cycle type $\lambda$ is $n!/z_\lambda$
\cite[Prop. 1.3.2]{ec1}. For example, for $\lambda = 44211 = (1^2,2^1,4^2)$, $z_\lambda = 1^2 \cdot 2^1 \cdot 4^2 \cdot 2! \cdot 1! \cdot 2! = 128$.

A rooted tree has one distinguished vertex assumed to be a common
ancestor of all other vertices.  The neighbors of the root are its
\emph{children}.  Each vertex other than the root has a unique
parent going along the path back to the root, the other neighbors are
its children.  In a binary tree, each vertex either has two children or
no children.  A vertex with no children is a \emph{leaf}, and a vertex
with two children is an \emph{internal vertex}.  Two binary rooted
trees with labeled leaves are said to be \emph{equivalent} if there
is an isomorphism from one to the other as graphs mapping the root of
one to the root of the other.  %We will use $\approx$ as notation for such equivalence.  (I don't think we need \approx since we look at inequivalent trees anyway, MK)
Let $B_n$ be the set of inequivalent binary rooted
trees with $n\geq 1$ leaves, and let $b_n$ be the number of elements
in the set $B_n$.  The sequence of $b_n$'s for $n\geq 1$ begins
$$
1, 1, 1, 2, 3, 6, 11, 23, 46, 98.
$$

We can inductively define a linear order on rooted trees as follows. We say that $T > S$ if either:
\begin{itemize}
 \item $T$ has more leaves than $S$
 \item $T$ and $S$ have the same number of leaves, $T$ has subtrees $T_1$ and $T_2$, $T_1 \geq T_2$, $S$ has subtrees $S_1$ and $S_2$, $S_1 \geq S_2$, and $T_1 > S_1$ or $T_1 = S_1$ and $T_2 > S_2$
\end{itemize}
We assume that every tree $T$ in $B_n$, $n \geq 2$, is presented so that $T_1 \geq T_2$, where $T_1$ is the left subtree (or upper subtree if the tree is drawn with the root on the left or on the right) and $T_2$ is the right (or lower) subtree.

For each tree $T \in B_n$, we can identify its automorphism group
$A(T)$ as follows.  Fix a labeling on the leaves of $T$ using the
numbers $1,2,\ldots, n$.  Label each internal vertex by the union of
the labels for each of its children.  The edges in $T$ are pairs of
subsets from $[n]:=\{1,\ldots, n\}$, each representing the label of a
child and its parent.  Let $v=[v(1),v(2),\ldots, v(n)]$ be a
permutation in the symmetric group $\s_n$.  Then, $v \in A(T)$ if
permuting the leaf labels by the function $i \mapsto v(i)$ for each
$i$ leaves the set of edges fixed.

A theorem due to Jordan \cite{Jordan} tells us that if $T$ is a
tree with subtrees $T_1$ and $T_2$, then $A(T)$ is isomorphic to
$A(T_1) \times A(T_2)$ if $T_1 \neq T_2$, and to the wreath product
$A(T_1) \wr \Z_2$ if $T_1 = T_2$. Since the automorphism group
of a tree on one vertex is trivial, this implies that the general
$A(T)$ can be obtained from copies of $\Z_2$ by direct and wreath
products (see \cite{arniePaper} for more details).
Furthermore, if $T_1 \neq T_2$, then the conjugacy type
of an element of $A(T)$ is $\lambda^1 \cup \lambda^2$, where $\lambda^i$
is the conjugacy type of an element of $A(T_i)$, $i = 1,2$, and
$\lambda^{1} \cup \lambda^{2}$ is the multiset union of the two sequences
written in decreasing order. If $T_1 = T_2$, then for an arbitrary
element of $A(T)$ either the leaves in each subtree remain in that subtree,
or all leaves are mapped to the other subtree. The conjugacy type
of an element of $A(T)$ is then either $\lambda^1 \cup \lambda^2$, where $\lambda^i$
is the conjugacy type of an element of $A(T_i)$, $i = 1,2$, or it
is $2 \lambda^1$,  where $\lambda^1$ is the conjugacy type of an element of $A(T_1)$.
In particular, the conjugacy type of any element of the automorphism
group of a binary tree must be a binary partition.

Next, we define tanglegrams.   Given a permutation $v \in \s_n$ along
with two trees $T,S \in B_n$ each with leaves labeled $1,\ldots, n$,
we construct an \emph{ordered binary rooted tanglegram} $(T,v,S)$
of size $n$ with $T$ as the left tree, $S$ as the right tree, by
identifying leaf $i$ in $T$ with leaf $v(i)$ in $S$.  Note, $(T,v,S)$
and $(T',v',S')$ are considered to represent the same tanglegram
provided $T = T'$,\ $S = S'$ as trees and $v'=uvw$ where $u % I changed \approx to = since B_n = {inequivalent trees} and we have canonical representatives
\in A(T)$ and $w \in A(S)$.  Let $T_n$ be the set of all ordered binary rooted
tanglegrams of size $n$, and let $t_n$ be the number of elements in
the set $T_n$.  For example, $t_3=2$ and $t_4=13$.
Figures~\ref{fig:t3} and~\ref{fig:t4} show the tanglegrams of sizes 3
and 4 where we draw the leaves of the left and right tree on separate
vertical lines and show the matching using dashed lines.  The dashed
lines are not technically part of the graph, but this visualization
allows us to give a planar drawing of the two trees.

We remark that the \emph{planar binary trees} with $n\geq 2$ leaves are a
different family of objects from $B_n$ that also come up in this paper.  These are trees embedded in
the plane so the left child of a vertex is distinguishable from the
right child.  The planar binary trees with $n+1$ leaves are well known
to be counted by Catalan numbers
$$\cat_{n} = \frac{1}{n+1}\binom{2n}{n} =\frac{2^n(2n-1)!!}{(n+1)!}$$
because they clearly satisfy the Catalan recurrence
$$
\cat_n = \cat_0\cat_{n-1} + \cat_1\cat_{n-2} +  \cat_2\cat_{n-3} + \cdots + \cat_{n-1}\cat_{0}
$$
with $\cat_0=\cat_1=1$.   For example, there are
$c_2=2$ distinct planar binary trees with $3$ leaves which are mirror
images of each other while $b_3=1$.  The sequence of $c_n$'s for
$n\geq 0$ begins
$$
1, 1, 2, 5, 14, 42, 132, 429, 1430, 4862 ,
$$
see \cite[A000108]{oeis}.

Dulucq and Guibert \cite{Dulucq.Guibert} have studied ``twin binary
trees'', which are pairs of planar binary trees with matched vertices.
This is the planar version of tanglegrams.   They show that twin
binary trees are in bijection with Baxter permutations.  The Baxter
permutations in $S_n$ are enumerated by a formula due to
Chung-Graham-Hoggart-Kleiman \cite{Chung.Graham.Hoggatt.Kleiman}
$$
a_n = \frac{\sum_{k=1}^{n}\binom{n+1}{k-1} \binom{n+1}{k} \binom{n+1}{k+1}}
{\binom{n+1}{1} \binom{n+2}{2}}
$$
See also the bijective proof by Viennot \cite{Viennot.1981}, and
further refinements \cite {Cori.Dulucq.Viennot,MR1417289}.

\section{Proof of the main theorems} \label{proof}

The focus of this section is the proof of Theorem \ref{thm:1}, namely that
$$t_{n}= \sum_\lambda \frac{\prod_{i=2}^{\ell(\lambda)} \left(2(\lambda_i+\cdots+\lambda_{\ell(\lambda)})-1\right)^2}{z_\lambda},$$
where the sum is over binary partitions of $n$. The proof of Theorem \ref{thm:1} reflects the chronological steps of discovery.
Theorem \ref{thm:trees} will follow from a auxiliary result, and the proof of Theorem~\ref{thm:tchains} is similar and is included at the end of the section.

The number of tanglegrams is, by definition, equal to
$$t_n = \sum_T \sum_S |\p C(T,S)|,$$ where the sums on the right are
over inequivalent binary trees with $n$ leaves, and $\p C(T,S)$ is
the set of double cosets of the symmetric group $\s_n$ with respect to the double
action of $A(T)$ on the left and $A(S)$ on the right. Let us fix $T\in B_n$
and $S\in B_n$ and write $\p C = \p C(T,S)$. Then
$$|\p C| = \sum_{C \in \p C} 1 = \sum_{C \in \p C} \frac{|C|}{|C|} = \sum_{C \in \p C} \sum_{w \in  C} \frac{1}{|C|} = \sum_{w \in \s_n} \frac{1}{|C_w|},$$
where $C_w$ is the double coset of $\s_n$ that contains $w$. It is known (e.g.\ \cite[Theorem 2.5.1 on page 45 and Exercise 40 on page 49]{Herstein})
that the size of the double coset $C_w = A(T)wA(S)$ is the quotient
$$\frac{|A(T)| \cdot |A(S)|}{|A(T) \cap wA(S)w^{-1}|},$$
and therefore,
$$|\p C| = \sum_{w \in \s_n} \frac{|A(T) \cap wA(S)w^{-1}|}{|A(T)| \cdot |A(S)|}.$$
We have
$$\sum_{w \in \s_n} |A(T) \cap wA(S)w^{-1}| = \sum_{w \in \s_n} \sum_{u \in A(T)} \sum_{v \in A(S)} \llbracket u = wvw^{-1} \rrbracket = \sum_{u \in A(T)} \sum_{v \in A(S)} \sum_{w \in \s_n}  \llbracket u = w v w^{-1}\rrbracket,$$
where $\llbracket \cdot \rrbracket$ is the indicator function. Now $u = w v w^{-1}$ can only be true if $u$ and $v$ are permutations of the same conjugacy type $\lambda$, which must necessarily be a binary partition as noted above. Furthermore, if $u$ and $v$ are both of type $\lambda$, then there are $z_\lambda$ permutations $w$ for which $u = w v w^{-1}$. That means that
\begin{equation} \label{cosetsize}
|\p C(T,S)| = \frac{\sum_\lambda |A(T)_\lambda| \cdot |A(S)_\lambda| \cdot z_\lambda}{|A(T)| \cdot |A(S)|},
\end{equation}
where $A(T)_\lambda$ (respectively, $A(S)_\lambda$) denotes the elements of $A(T)$ (resp., $A(S)$) of type $\lambda$.

Equation \eqref{cosetsize} is already quite useful for computing all tanglegrams with fixed left and right trees. For example, if $T$ and $S$ are both the least symmetric tree with only one cherry, then $A(T) = A(S) = \{\id,(1,2)\}$, the sum is over only two binary partitions of size $n$, namely $(1,\ldots, 1)$ and $(2,1,\ldots, 1)$, and we get
$$|\p C| = \frac{n! + 2 (n-2)!}{2 \cdot 2} = \frac{(n^2-n+2) (n-2)!}{4}.$$
In some other cases the summation is over many more $\lambda$'s, and can get quite complicated.

However, to get the formula for $t_n$ we want to sum Equation~\eqref{cosetsize} over all pairs of trees, and fortunately a change of the order of summation helps. Indeed, we have
\begin{align}\label{eqn:tn.form.1}
  t_n &= \sum_T \sum_S \frac{\sum_\lambda |A(T)_\lambda| \cdot |A(S)_\lambda| \cdot z_\lambda}{|A(T)| \cdot |A(S)|} = \sum_\lambda z_\lambda \cdot \sum_T \sum_S \frac{|A(T)_\lambda| \cdot |A(S)_\lambda|}{|A(T)| \cdot |A(S)|}
  \\&= \sum_\lambda z_\lambda \cdot   \left( \sum_T \frac{|A(T)_\lambda|}{|A(T)|}\right)^2 ,
   \end{align}
and the main theorem will be proved once we have shown the following proposition.

\begin{prop} \label{prop}
 For a binary partition $\lambda$,
$$
   \sum_{T \in B_n} \frac{|A(T)_\lambda|}{|A(T)|} = \frac{\prod_{i=2}^{\ell(\lambda)} (2\left(\lambda_i+\cdots+\lambda_{\ell(\lambda)}\right)-1)}{z_\lambda},
$$
 where  $A(T)_\lambda$ denotes the elements of $A(T)$ of type $\lambda$.
\end{prop}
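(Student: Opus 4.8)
The plan is to pass from the weighted sum over unlabeled trees to an \emph{unweighted} count over labeled binary trees, where the factors $|A(T)|^{-1}$ disappear and the problem becomes a transparent count of trees fixed by a single permutation. Writing $n=|\lambda|$ and fixing once and for all a permutation $\sigma\in\s_n$ of cycle type $\lambda$, I would let $F(\lambda)$ denote the number of binary rooted trees with leaves labeled $1,\dots,n$ (children unordered, so that there are $(2n-3)!!$ of them in all) that are \emph{fixed} by $\sigma$, meaning that relabeling the leaves by $i\mapsto\sigma(i)$ returns the same labeled tree.

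The first step is the reduction
$$\sum_{T\in B_n}\frac{|A(T)_\lambda|}{|A(T)|}=\frac{F(\lambda)}{z_\lambda},$$
which I would obtain by counting the set of pairs $(\tilde T,\tau)$, where $\tilde T$ is a labeled binary tree on $[n]$ and $\tau$ is an automorphism of $\tilde T$ of cycle type $\lambda$, in two ways. Grouping by the underlying shape $T$, each shape contributes $n!/|A(T)|$ labeled trees, and each such tree has exactly $|A(T)_\lambda|$ automorphisms of type $\lambda$ (its automorphism group is an $\s_n$-conjugate of $A(T)$, and conjugation preserves cycle type), giving $n!\sum_T|A(T)_\lambda|/|A(T)|$. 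Grouping instead by $\tau$, there are $n!/z_\lambda$ permutations of type $\lambda$, each fixing $F(\lambda)$ labeled trees, giving $(n!/z_\lambda)\,F(\lambda)$. Equating the two counts proves the identity and reduces the proposition to showing $F(\lambda)=\prod_{i=2}^{\ell(\lambda)}\bigl(2(\lambda_i+\cdots+\lambda_{\ell(\lambda)})-1\bigr)$.

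The second and main step is to prove this product formula by a $\sigma$-equivariant version of the classical leaf-insertion proof that there are $(2n-3)!!$ labeled binary trees. I would build a $\sigma$-fixed tree by inserting the cycles (orbits) of $\sigma$ one at a time in \emph{increasing} order of length $\mu_1\le\cdots\le\mu_\ell$, noting that the target product equals $\prod_{j=1}^{\ell-1}(2P_j-1)$ with $P_j=\mu_1+\cdots+\mu_j$. The first orbit forces the unique cyclically symmetric tree on a single orbit (so $F$ of a one-cycle partition is $1$). To add the $(j+1)$-st orbit to a current $\sigma$-fixed tree on $P_j$ leaves, observe that this tree has $2P_j-1$ ``slots'' (its $2P_j-2$ edges plus one slot above the root), permuted by $\sigma$. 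A $\sigma$-equivariant placement of the new orbit is an equivariant map from the new (transitive) orbit to the slot set, whose image is therefore a single slot-orbit $O$; for an orbit of size $d$ there are exactly $d$ such maps (the image of one leaf may be chosen freely, the rest are forced). Whenever several leaves are forced onto one slot they form a single $\sigma^d$-cycle, which extends to a grafted subtree in exactly one way since $F$ of a single cycle is $1$; the subtrees at the other slots of $O$ are then $\sigma$-translates. Thus $O$ contributes $d$ trees, and summing $d$ over all slot-orbits recovers the total slot count $2P_j-1$. Multiplying over $j$ yields the formula.

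The main obstacle is the bookkeeping in this equivariant insertion, and in particular the divisibility that makes it work: a slot-orbit of the current tree has size a power of $2$ dividing $\mu_j$, hence dividing $\mu_{j+1}$ since all parts are powers of $2$ and $\mu_{j+1}\ge\mu_j$. This is exactly why the orbits must be processed from smallest to largest, and where the binary-partition hypothesis is used; were a short cycle added to longer ones, no equivariant map onto an oversized slot-orbit would exist. Care is also needed to verify that distinct choices of slot-orbit and of equivariant map give distinct labeled trees and that every $\sigma$-fixed tree arises uniquely (the reverse map being deletion of the largest orbit), so that the count is exact rather than an upper bound. As an independent check on the recursion this produces, one can verify that the closed form satisfies the functional equation $\Phi=p_1+\tfrac12\Phi^2+\tfrac12\Phi^{[2]}$ for the cycle-index generating function $\Phi=\sum_\lambda a_\lambda p_\lambda$ (with $\Phi^{[2]}$ the substitution $p_i\mapsto p_{2i}$), which follows from Jordan's theorem and the cycle index of a wreath product with $\Z_2$.
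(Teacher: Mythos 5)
Your proposal is correct, and it takes a genuinely different route from the paper. The paper proves the proposition by induction on $n$ via the root decomposition of a tree into subtrees $T_1,T_2$ (treating the cases $T_1>T_2$ and $T_1=T_2$ separately, the latter via the construction $w=\pi w_1\pi w_1^{-1}\pi w_2$), which reduces the claim to the recurrence $2\q_\lambda=\q_{\lambda/2}+\sum\q_{\lambda^1}\q_{\lambda^2}$; that recurrence is then a polynomial identity (Lemma~\ref{lemma}) verified by a second induction using linearity in $x_2$. You instead first convert the weighted sum into the count $F(\lambda)/z_\lambda$ of leaf-labeled trees fixed by one permutation of type $\lambda$ --- a Burnside-style double count that is clearly valid, and which is close in spirit to the species-theoretic connection mentioned at the end of the paper --- and then establish $F(\lambda)=\prod_{j}(2P_j-1)$ by an equivariant leaf-insertion bijection, one orbit at a time from smallest to largest. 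The two delicate points you flag do go through: every slot-orbit has size dividing the next part because an edge-orbit has size dividing the orbit size of any leaf below it (automorphisms preserve depth, so $\sigma^m$ fixing a leaf forces it to fix each ancestor of that leaf), and these sizes are powers of $2$ bounded by $\mu_{j+1}$; and insertion/deletion are mutually inverse because no two slots in a single $\sigma$-orbit can be ancestor-related (again by depth preservation), so the grafted subtrees are exactly the maximal all-new subtrees recovered by deleting the largest orbit, each determined uniquely by the fact that a single $2^k$-cycle fixes exactly one tree. What each approach buys: the paper's recurrence \eqref{pind} is exactly what drives Algorithm~\ref{alg2} for uniform random generation, so it earns its keep elsewhere in the paper; your argument explains each factor $2(\lambda_i+\cdots+\lambda_{\ell(\lambda)})-1$ combinatorially, makes the role of the binary-partition hypothesis transparent, and specializes at $\lambda=1^n$ to the classical $(2n-3)!!$ insertion proof that the paper only remarks on. A complete write-up should include the depth-preservation lemma and the verification that deletion inverts insertion, but no idea is missing.
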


The proposition also implies Theorem~\ref{thm:trees}, as
$$\sum_T 1 = \sum_T \sum_\lambda \frac{|A(T)_\lambda|}{|A(T)|} = \sum_\lambda \sum_T \frac{|A(T)_\lambda|}{|A(T)|}.$$

If $\lambda = 1^n$, then $|A(T)_\lambda| = 1$ for all $T \in B_n$, so the proposition is saying that
$$\sum_T \frac 1{|A(T)|} = \frac{(2n-3)!!}{n!} = \frac{\cat_{n-1}}{2^{n-1}}.$$
This is equivalent to $\sum_T 2^{n-1}/|A(T)| = \cat_{n-1}$.
Since $2^{n-1}/|A(T)|$ counts all planar binary trees isomorphic to $T$, this is just the
well-known fact that there are $\cat_{n-1}$ planar binary trees with $n$ leaves.

For a general $\lambda$, however, the proposition is far from obvious. What we need is a recursion satisfied by the expression on the right, analogous to the recursion $\cat_{n} = \cat_0 \cat_{n-1} + \cat_1 \cat_{n-1} + \cdots + \cat_{n-1} \cat_0$ for Catalan numbers.

\begin{lemma} \label{lemma}
  For a nonempty subset $S = \{i_1<i_2<\ldots<i_k\}$ of the natural numbers define
  \begin{equation}\label{eq:rec.for.r}
r_S(x_1,x_2,\ldots) = (x_{i_2} + \cdots + x_{i_k} - 1)(x_{i_3} + \cdots + x_{i_k} - 1) \cdots (x_{i_{k-1}}+x_{i_k} - 1) (x_{i_k} - 1).
\end{equation}
 Let $n \geq 2$, let $\x$ denote variables $x_1,x_2,\ldots$, and let $\x/2$ denote $x_1/2,x_2/2,\ldots$. Then
 $$r_{[n]}(\x) = 2^{n-1} r_{[n]}(\x/2) + \sum_{1 \in S \subsetneq [n]} r_S(\x) \cdot r_{[n] \setminus S}(\x).$$
\end{lemma}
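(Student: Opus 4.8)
The plan is to prove the identity by induction on $n$, the key structural observation being that, once the variable $x_1$ is discarded (it never appears), every term is affine-linear in the next variable $x_2$; this lets me split the statement into two simpler identities, the first of which is the inductive hypothesis. Throughout I write $A(P)=\sum_{i\in P}x_i$ and, for a finite set $P$ of positive integers, I introduce the auxiliary product $\tilde r_P(\x)$ obtained from $r_P$ by also including a factor for the smallest element of $P$; equivalently $\tilde r_P = r_{\{0\}\cup P}$ for a formal symbol $0$ smaller than everything, so that $\tilde r_P=(A(P)-1)\,r_P$. The first step is to peel off the outermost factor of $r_{[n]}$: since $x_2$ occurs only in the factor $(x_2+\cdots+x_n-1)$, I get $r_{[n]}(\x)=(A(L)+x_2-1)\,\tilde r_L(\x)$, where $L=\{3,\dots,n\}$ and the remaining product $\prod_{i=3}^n(x_i+\cdots+x_n-1)$ is exactly $\tilde r_L$. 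The symmetric term factors the same way, $2^{n-1}r_{[n]}(\x/2)=(A(L)+x_2-2)\,\tilde r^{(2)}_L(\x)$, where $\tilde r^{(2)}_L$ denotes $\tilde r_L$ with every $-1$ replaced by $-2$.

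Next I reorganize the subset sum according to whether $2\in S$. When $2\notin S$, the element $2$ is the minimum of $[n]\setminus S$, so its factor is suppressed, and a short computation turns this part of the sum into the self-convolution $\Phi_L:=\sum_{U\subseteq L}\tilde r_{L\setminus U}\,\tilde r_U$, which carries no $x_2$. When $2\in S$, the factor of $2$ inside $r_S$ contributes $(A(S')+x_2-1)$ with $S'=S\setminus\{1,2\}$, and this part becomes $\sum_{\emptyset\neq U\subseteq L}(A(L\setminus U)+x_2-1)\,\tilde r_{L\setminus U}\,r_U$. Collecting everything, the whole lemma reduces to a single identity in which each term is affine-linear in $x_2$, so I may compare the coefficient of $x_2$ and the coefficient of $x_2^0$ separately.

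The coefficient of $x_2$ yields exactly $\tilde r_L-\tilde r^{(2)}_L=\sum_{\emptyset\neq U\subseteq L}\tilde r_{L\setminus U}\,r_U$. This is the lemma itself for the $(n-1)$-element ground set $\{0\}\cup L$, written in the $\tilde r$-form (one checks that the $\tilde r$-form and the stated form are literally the same identity after adjoining the inert minimum), so it holds by the inductive hypothesis. For the constant coefficient I use the identity just proved together with the additivity $A(U)+A(L\setminus U)=A(L)$: writing $\Phi_L=\tilde r_L+\sum_{\emptyset\neq U}(A(U)-1)\,\tilde r_{L\setminus U}\,r_U$ and adding the $x_2^0$-part of the $2\in S$ sum, the two families of coefficients combine to $(A(L)-2)\sum_{\emptyset\neq U}\tilde r_{L\setminus U}\,r_U$, which collapses via the $x_2$-coefficient identity to $(A(L)-2)(\tilde r_L-\tilde r^{(2)}_L)$. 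After this cancellation the constant coefficients match ($(A(L)-1)\tilde r_L$ on both sides), completing the step; the base case $n=2$ is the one-line check $x_2-1=(x_2-2)+1$.

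The main obstacle is the bookkeeping around the symmetric term $2^{n-1}r_{[n]}(\x/2)$ and the self-convolution $\Phi_L$: a naive induction does not close because $r_S$ treats the minimum of $S$ specially, so I expect the real work to be (i) isolating the correct auxiliary quantity $\tilde r$ so that the ``$2\notin S$'' part is a clean self-convolution and the statement is stable under adjoining an inert minimum, and (ii) verifying that the constant-in-$x_2$ identity follows formally from the linear-in-$x_2$ identity rather than requiring a separate induction. The additivity of $A(\cdot)$ over the split $U\sqcup(L\setminus U)=L$ is precisely what makes step (ii) go through.
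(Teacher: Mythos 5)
Your proof is correct, and its skeleton is the same as the paper's: induction on $n$, the observation that both sides are affine in $x_2$, and the identification of the coefficient of $x_2$ with the inductive hypothesis applied to the $(n-1)$-element ground set $[2,n]$ (your $\{0\}\cup L$ after relabelling). Where you genuinely diverge is in how the remaining degree of freedom is pinned down. The paper specializes at $x_2 = 2 - x_3 - \cdots - x_n$, which kills $2^{n-1}r_{[n]}(\x/2)$ outright and collapses the subset sum via the sign-reversing pairing $S \leftrightarrow S\,\triangle\,\{2\}$, leaving only the unpaired term $r_{[n]\setminus\{2\}}(\x)\cdot r_{\{2\}}(\x)$. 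You instead compare constant terms directly, splitting the sum according to whether $2\in S$, and close the computation by feeding the already-established $x_2$-coefficient identity back in together with the additivity $A(U)+A(L\setminus U)=A(L)$. Your route costs more bookkeeping --- the auxiliary $\tilde r$, the self-convolution $\Phi_L$, and the convention $\tilde r_\emptyset = r_{\{0\}}=1$ rather than $(A(\emptyset)-1)r_\emptyset=-1$, which you do need for the boundary terms $S=\{1\}$, $S=\{1,2\}$, and $S=[n]\setminus\{2\}$, so state it explicitly --- but it buys the conceptual point that the constant-term identity is a formal consequence of the linear-coefficient identity rather than a separate verification; the paper's substitution is shorter but rests on an independent cancellation observation. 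Both arguments are complete.
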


\begin{example}
 For $n = 3$, the lemma says that
 $$(x_2 + x_3 - 1)(x_3 - 1) = (x_2 + x_3 - 2)(x_3 - 2) + 1 \cdot (x_3-1) + (x_2-1) \cdot 1 + (x_3-1) \cdot 1,$$
 where the last three terms on the right-hand side correspond to subsets $\{1\}$, $\{1,2\}$, and $\{1,3\}$, respectively.
 As another example, take $x_i = 2$ for all $i$. Then $r_S(\x) = (2|S|-3)!!$ (where we interpret $(-1)!!$ as $1$), $r_S(\x/2) = 0$,  and by the obvious symmetry of $S$ and $[n] \setminus S$ the lemma yields
 $$2 \cdot (2n-3)!! = \sum_{k = 1}^{n-1} \binom n k (2k-3)!! (2n-2k-3)!!,$$
 which is equivalent to the standard recurrence for Catalan numbers.
\end{example}

\bigskip

\begin{proof}[Proof of Lemma \ref{lemma}]
  The proof is by induction on $n$. For $n = 2$, the statement is simply $x_2 - 1 = (x_2 - 2) + 1 \cdot 1$. Assume that the statement holds for $n-1$, and let us prove it for $n$. Both sides are linear functions in $x_2$, so it is sufficient to prove that they have the same coefficient at $x_2$ and that they give the same result for one value of $x_2$.

 The coefficient of $x_2$ in $r_{[n]}(\x)$ (resp., $2^{n-1}r_{[n]}(\x/2)$) is clearly $r_{[2,n]}(\x)$ (resp., $2^{n-2} r_{[2,n]}(\x/2)$).
 On the other hand, $r_S(\x) \cdot r_{[n] \setminus S}(\x)$ contains $x_2$ if and only if $2 \in S$, in which case the coefficient at $x_2$ is $r_{S \setminus \{1\}}(\x) \cdot r_{[2,n] \setminus S}(\x)$. The coefficients on both sides are equal by induction.

 Plug the value $x_2 = 2 - x_3 - \cdots - x_n$ into both sides. Clearly, the left-hand side becomes $r_{[n]\setminus \{2\}}(\x)$. It is easy to see that if $2 \in S$, then $r_S(\x) \cdot r_{[n] \setminus S}(\x) + r_{S \setminus \{2\}}(\x) \cdot r_{([n] \setminus S) \cup \{2\}}(\x) = 0$. That means that all the terms in the summation cancel out except $r_{[n]\setminus \{2\}}(\x) \cdot r_{\{2\}}(\x) = r_{[n]\setminus \{2\}}(\x)$. Obviously, $r_{[n]}(\x/2) = 0$, so the right-hand side also equals $r_{[n]\setminus \{2\}}(\x)$.
\end{proof}

\begin{proof}[Proof of Proposition \ref{prop}]
Say $\lambda$ is a binary partition of $n$.  The proof is by
induction on $n$.  For $n = 1$, the statement is obvious.  Assume that the
statement holds for all binary partitions up to size $n-1$. Our task is to show
$$
\sum_T
\frac{|A(T)_\lambda|}{|A(T)|} = \frac{r_{[\ell(\lambda)]}(2\lambda_1, 2\lambda_2, 2\lambda_3,\ldots)}{z_{\lambda}}
$$
by showing the left hand side satisfies a recurrence similar to
\eqref{eq:rec.for.r}.

Given $T \in B_n$, let $T_1$ and $T_2$ be the subtrees of the root in
$T$. Fix a labeling on the leaves of $T$ such that the leaves of $T_1$ are
labeled $[1,k]$ and the leaves of $T_2$ are labeled $[k+1,n]$.
Consider each $A(T_i)$ to be a subgroup of the permutations of the
leaf labels for $T_i$.  We can obtain a permutation of type $\lambda$
in $A(T)$ in one of two ways.
First, we can choose permutations $w_1 \in A(T_1), w_2 \in A(T_2)$ of types $\lambda^1$ and $\lambda^2$, then $w_1 w_2$ is a permutation of $A(T)$ of type $\lambda$.
Second, if all parts of $\lambda$ are at least $2$ and $T_1 =
T_2$ (and in particular $n = 2k$), we can choose an arbitrary permutation $w_1 \in A(T_1)$ and
another permutation $w_2 \in A(T_1)$ specifically of type $\lambda/2:=
(\lambda_1/2, \lambda_2/2,\ldots)$ and construct a permutation $w\in
A(T)$ of cycle type $\lambda$ as follows.  Say $f:[1,k]\longrightarrow
[k+1,n]$ mapping $i$ to $i+k$  induces an isomorphism of $T_1$ and $T_2$.  Define the ``tree
flip permutation'' $\pi$ to be the product of the transpositions
interchanging $i$ with $f(i)$ for all $1\leq i \leq k$.
Now take the
product
$$
w = \pi w_1 \pi w_1^{-1} \pi w_2.
$$
% Sara, thank you for this example. Perhaps it's implicitly clear, but don't we
% need more than just that this is a way to construct permutations with cycle
% type lambda? IIUC, what we need to show is that it's a bijection between such
% (w_1, w_2) pairs and permutations with cycle type lambda. If we just needed a
% way to make any old permutations, we could just use $\pi w_2$. (EM, with SB
% agreeing in person.)
It is clear that $w \in
A(T)$ since it is the product of permutations in $A(T)$.  Observe also
that the cycles of $w$ are constructed so the leaf labels of $T_1$
interleave the leaf labels of $T_2$ in the cycles of $w_2$ so $w$ will
have cycle type $\lambda$.  For example, if $\lambda =(6,4)$, then
$|\lambda|=10$ and $\pi=(1\ 6)(2\ 7)(3\ 8)(4\ 9) ( 5\ 10)$.  If we
choose $w_1=(1\ 4) (2\ 5) (3)$ and $w_2=(6\ 9\ 7) (8\ 10)$ then $w=\pi
w_1 \pi w_1^{-1} \pi w_2=(6\ 1\ 9\ 5\ 7\ 4) (8\ 2\ 10\ 3)$, all in
cycle notation. Also, every element of $A(T)$ is constructed in one of these
two ways.
%%checking
%% CL-USER> (setf p '(6 7 8 9 10 1 2 3 4 5))
%% (6 7 8 9 10 1 2 3 4 5)
%% CL-USER> (setf w '(4 5 3 1 2 6 7 8 9 10))
%% (4 5 3 1 2 6 7 8 9 10)
%% CL-USER> (setf v '(1 2 3 4 5 9 6 10 7 8))
%% (1 2 3 4 5 9 6 10 7 8)
%% CL-USER> (mult-perms p (mult-perms w (mult-perms p (mult-perms w (mult-perms p v)))))
%% (9 10 8 6 7 1 4 2 5 3)

We need to be careful to differentiate between the cases
when the subtrees $T_1,T_2$ are different and when they are equivalent.
We
have
 \begin{align*}
  \sum_T \frac{|A(T)_\lambda|}{|A(T)|} & = \sum_{T_1 > T_2} \frac{|A(T)_\lambda|}{|A(T)|} + \sum_{T_1 = T_2} \frac{|A(T)_\lambda|}{|A(T)|} =\\ \sum_{T_1 > T_2} \left(\sum_{\lambda^1 \cup \lambda^2 = \lambda} \frac{|A(T_1)_{\lambda^1}| \cdot |A(T_2)_{\lambda^2}|}{|A(T_1)| \cdot |A(T_2)|} \right)
  &+ \sum_{T_1} \frac{\left(\sum_{\lambda^1 \cup \lambda^2 = \lambda} |A(T_1)_{\lambda^1}| \cdot |A(T_1)_{\lambda^2}|\right) + |A(T_1)| \cdot |A(T_1)_{\lambda/2}|}{2|A(T_1)|^2}
 \end{align*}
 or equivalently
 \begin{equation}\label{eq:p.rec}
2\sum_{T \in B_{n}} \frac{|A(T)_\lambda|}{|A(T)|} = \sum_{T_1 \in B_{n/2}} \frac{|A(T_1)_{\lambda/2}|}{|A(T_1)|} +
\sum_{\lambda^1 \cup \lambda^2 = \lambda} \left( \sum_{T_1 \in B_{|\lambda^{1}|}}\frac{|A(T_1)_{\lambda^1}|}{|A(T_1)|} \right)
\left( \sum_{T_2 \in B_{|\lambda^{2}|}} \frac{|A(T_2)_{\lambda^2}|}{|A(T_2)|} \right).
 \end{equation}
 Let
 $$\q_\lambda = \frac{\prod_{i=2}^{\ell(\lambda)} (2(\lambda_i+\cdots+\lambda_{\ell(\lambda)})-1)}{z_\lambda} =
\frac{r_{[\ell(\lambda)]}(2\lambda_1, 2\lambda_2, 2\lambda_3,\ldots)}{z_{\lambda}} ;$$
 the notation also makes sense if $\lambda_{\ell(\lambda)} = 1/2$, as in that case $\q_\lambda = 0$.
 By the induction hypothesis and \eqref{eq:p.rec}, it suffices to prove that
 \begin{equation}\label{pind}
  2 \q_\lambda = \q_{\lambda/2} + \sum_{\lambda^1 \cup \lambda^2 = \lambda} \q_{\lambda^1} \cdot \q_{\lambda^2}.
 \end{equation}
 After multiplying both sides by $z_\lambda$, this is
 \begin{align*}
 2& \prod_{i=2}^{\ell(\lambda)} (2(\lambda_i+\cdots+\lambda_{\ell(\lambda)})-1) = 2^{\ell(\lambda)}\prod_{i=2}^{\ell(\lambda)} (\lambda_i+\cdots+\lambda_{\ell(\lambda)}-1) \\
 +& \sum_{\lambda^1 \cup \lambda^2 = \lambda} \binom{\lambda}{\lambda^1,\lambda^2} \cdot \prod_{i=2}^{\ell(\lambda^1)} (2(\lambda^1_i+\cdots+\lambda^1_{\ell(\lambda^1)})-1)\cdot \prod_{i=2}^{\ell(\lambda^2)} (2(\lambda^2_i+\cdots+\lambda^2_{\ell(\lambda^2)})-1),
 \end{align*}
% For this next statement I can infer what the m_i(\lambda) means, and that
% this product is the ratio \frac{z_\lambda}{z_{\lambda_1} z_{\lambda_2}, but
% would that be worth mentioning?
 where $\binom{\lambda}{\lambda^1,\lambda^2} = \prod_i \binom{m_i(\lambda)}{m_i(\lambda^1)}$. This equality holds by Lemma \ref{lemma} with $x_i = 2\lambda_i$.
\end{proof}

We conclude this section with the proof of Theorem~\ref{thm:tchains}.

\newcommand{\mct}{C^{\mathbf{T}}}  %% multicosets for fixed list of trees
\newcommand{\mctw}{C^{\mathbf{T}}(w_{1},\ldots, w_{k-1})}  %% multicoset for fixed list of trees  and fixed sequence of w's
\newcommand{\mctwp}{C^{\mathbf{T'}}(w_{2},\ldots, w_{k-1})}  %% multicosets for one smaller list

\begin{proof}[Proof of Theorem~\ref{thm:tchains}]
Let $\mathbf{T}=(T_{1},T_{2},\ldots, T_{k})$ be an ordered list of binary trees in
$B_{n}$. Define $\mct$ to be the set of
``multicosets'' of $\s_{n}$ with respect to  $A(T_{1})\times
A(T_{2})\times \cdots \times A(T_{k})$.  More concretely, given
$(w_{1},\ldots, w_{k-1}),(w'_{1},\ldots, w'_{k-1}) \in (\s_{n})^{k-1}$,
we say $(w_{1},\ldots, w_{k-1}) \equiv_{\mathbf{T}} (w'_{1},\ldots, w'_{k-1})$
provided there exist $t_{i} \in A(T_{i})$ such that
$w_{i}=t_{i}w'_{i}t_{i+1}$ for all $i=1,\ldots, k-1$.  Then,
$\mct$ is the set of equivalence classes
modulo $\equiv_{\mathbf{T}}$.  By definition, the number of tangled chains of
length $k$ and size $n$, denoted $t(k,n)$, is given by
\begin{equation}\label{eqn:k.1}
t(k,n) = \sum |\mct|
\end{equation}
where the sum is over all ordered lists
$\mathbf{T}=(T_{1},T_{2},\ldots, T_{k})$ of trees $T_i \in B_n$.

Fix a particular list of trees $\mathbf{T}=(T_{1},T_{2},\ldots, T_{k})$, and
let $\mctw$ be the multicoset in $\mct$
containing $(w_{1},\ldots, w_{k-1})$.  Clearly,
\begin{equation*}%\label{eqn:k.2}
|\mct| = \sum_{w_{1} \in \s_{n}} \sum_{w_{2} \in \s_{n}} \cdots \sum_{w_{k-1} \in \s_{n}} \frac{1}{|\mctw|}.
\end{equation*}
We give a recurrence for $|\mctw|$ in terms of the following subgroup.
Let $A(\mctw)$ be the subgroup of all $ t_1 \in A(T_1)$ such that
there exist $t_{i} \in A(T_{i})$ for $2\leq i\leq k$ satisfying
$w_{i}=t_{i}w_{i}t_{i+1}$ for all $i=1,\ldots, k-1$.  In this case,
$(t_1 w_{1},w_2,\ldots, w_{k-1}) \equiv_{\mathbf{T}}
(w_{1},w_2,\ldots, w_{k-1})$ so we think of $A(\mctw)$ as the ``left
automorphism group'' of $\mctw$.   Observe that
\[
A(\mctw) = A(T_{1}) \cap w_{1} A(T_{2})w_{1}^{-1} \cap \cdots \cap w_{1}w_{2}\cdots w_{k-1} A(T_{k}) w_{k-1}^{-1} \cdots w_{2}^{-1}w_{1}^{-1},
\]
so
\[
|A(\mctw)| = \sum_{i=1}^k \ \sum_{t_i \in A(T_i)}  \llbracket t_1 = w_1 t_2 w_1^{-1}\rrbracket \cdot \llbracket t_2 = w_2 t_3 w_2^{-1}\rrbracket \cdots \llbracket t_{k-1} = w_{k-1} t_k w_{k-1}^{-1}\rrbracket.
\]
Now let $\mathbf{T'}=(T_{2},\ldots, T_{k})$.
  For each $(v_{2},\ldots,v_{k-1}) \in \mctwp$, we can
prepend a $v_{1}$ to create a distinct element
$(v_1, v_{2},\ldots,v_{k-1}) \in \mctw$
exactly when $v_{1}$ is in  $A(T_1) w_{1} A(\mctwp)$ which is again a double coset of $\s_n$.
Thus, by the formula for double cosets we have
\begin{align*} % \label{eqn.k.3}
|\mctw| &= \frac{|A(T_{1})| \cdot
|A(\mctwp)|}{|A(\mctw)|}
\cdot |\mctwp|
\\
&=\frac{|A(T_{1})| \cdot |A(T_{2})| \cdots |A(T_{k})|}
{|A(\mctw)|}
\end{align*}
by induction on $k$.
Therefore,
\begin{equation} \label{eqn:k.4}
|\mct| = \sum_{w_{1} \in \s_{n}} \sum_{w_{2}
\in \s_{n}} \cdots \sum_{w_{k-1} \in \s_{n}}
\frac{|A(\mctw)|}{|A(T_{1})| \cdot |A(T_{2})| \cdots |A(T_{k})|},
\end{equation}
where the denominators do not depend on the $w_i$'s.

Focusing on the sum in the numerator in \eqref{eqn:k.4}, we have
\begin{align*}
  \sum_{(w_1,w_2,\ldots, w_{k-1})}  &|A(\mctw)|  \\
  &= \sum_{(w_1,w_2,\ldots, w_{k-1})}\ \sum_{t_1 \in A(T_1)} \cdots  \sum_{t_k \in A(T_k)} \llbracket t_1 = w_1 t_2 w_1^{-1}\rrbracket  \cdots \llbracket t_{k-1} = w_{k-1} t_k w_{k-1}^{-1}\rrbracket
  \\
  &=  \sum_{t_1 \in A(T_1)} \cdots  \sum_{t_k \in A(T_k)} \sum_{(w_1,w_2,\ldots, w_{k-1})} \ \llbracket t_1 = w_1 t_2 w_1^{-1}\rrbracket \cdots \llbracket t_{k-1} = w_{k-1} t_k w_{k-1}^{-1}\rrbracket
  \end{align*}
and so with similar logic as before, noting that the summand will be nonzero exactly when $t_1$, $t_2, \ldots, t_k$ are all of the same conjugacy type $\lambda$,
\begin{equation} \label{cosetsize.2}
|\mct| = \frac{\sum_\lambda |A(T_1)_\lambda| \cdot |A(T_2)_\lambda| \cdots |A(T_k)_\lambda| \cdot z_\lambda^{k-1}}{|A(T_1)| \cdot |A(T_2)| \cdots |A(T_k)|}.
\end{equation}
Plugging \eqref{cosetsize.2} into \eqref{eqn:k.1}, we obtain
\begin{align*}
t(k,n) &= \sum_{(T_1,\ldots, T_k)} \frac{\sum_\lambda
  |A(T_1)_\lambda| \cdot |A(T_2)_\lambda| \cdots |A(T_k)_\lambda| \cdot
  z_\lambda^{k-1}}{|A(T_1)| \cdot |A(T_2)| \cdots |A(T_k)|}
\\
&= \sum_\lambda
z_\lambda^{k-1} \cdot \left( \sum_{T \in B_n} \frac{|A(T)_\lambda|}{|A(T)|}\right)^k,
\end{align*}
and Theorem~\ref{thm:tchains}  now follows from Proposition~\ref{prop}.
\end{proof}

\section{Random generation of tanglegrams and inequivalent binary trees}\label{sec:random}

In this section, we describe an algorithm in 3 stages to produce a
random tanglegram in $T_n$.  The stages are based on
Equation~\eqref{eqn:tn.form.1} and the proof of Proposition~\ref{prop}.
A similar algorithm is also described to choose a random binary tree with $n$
leaves.
In this section, ``random'' will mean uniformly at random unless specified otherwise.

Recall from Section~\ref{proof} that if $T$ is a tree with equivalent
left and right subtrees, we denote by $\pi$ the ``tree flip
permutation'' between the subtrees.  Also, for a partition $\lambda$,
we defined
$$\q_\lambda = \frac{\prod_{i=2}^{\ell(\lambda)} (2(\lambda_i+\cdots+\lambda_{\ell(\lambda)})-1)}{z_\lambda}.$$
The $\q_\lambda$ notation also makes sense if $\lambda_{\ell(\lambda)} = 1/2$, as in that case $\q_\lambda = 0$.

\begin{alg}[Random generation of $w \in A(T)$] \label{alg1}
~\newline
\vspace{-.3cm}
~\newline
\emph{\textbf{Input:}} Binary tree $T \in B_n$.
  \\
\vspace{-.3cm}
  \\
 \emph{\textbf{Procedure:}} If $T$ is the tree with one vertex, let $w$ be the unique element of $A(T)$.
 Otherwise, the root of $T$ has subtrees $T_1$ and $T_2$.  Assume the leaves of $T_1$ are labeled $[1,k]$ and the leaves of $T_2$ are labeled $[k+1,n]$.  Use the algorithm recursively to produce $w_i \in A(T_i)$, $i=1,2$ where $A(T_1)$ is a subset of the permutations of $[1,n]$ which fix $[k+1,n]$ and $A(T_2)$ is a subset of the permutations of $[1,n]$ which fix $[1,k]$. Construct $w$ as follows.
\begin{itemize}
\item  If $T_1 \neq T_2$, set $w = w_1 w_2$.\\
\item If $T_1 = T_2$, choose either $w = w_1 w_2$ or %\approx -> =
  $w = \pi w_1 w_2$
  %% SB question: shouldn't this be $w = \pi w_1 \pi w_1^{-1} \pi w_2$ ??  %% MK answer: this is actually slightly trickly. If T_1 has leaves 1,...,k and T_2 has leaves k+1,...,n, then it probably makes sense that A(T_1) permutes 1,...,k and A(T_2) permutes k+1,...,n. In this way, I think w_1 w_2 and \pi w_1 w_2 are correct. Below, we have in one case T_1 \approx T_2, and then we just take elements of A(T_1), so the permutations permute 1,...,k, and then you have to use \pi a lot to make permutations in A(T_1) into permutations in A(T_2). Does take make sense? Perhaps this is a question for a chat? (MK, June 9)
  %% Upon further reflection, this is completely right as it was written. I am clarifying the leaf labels.  (SB)
  with equal probability.
\end{itemize}
\vspace{.3cm}
 \emph{\textbf{Output:}} Permutation $w \in A(T)$.
\end{alg}

\bigskip

\begin{alg}[Random generation of $T$ with non-empty $A(T)_\lambda$ and $w \in A(T)_\lambda$] \label{alg2}
~\newline
\vspace{-.3cm}
~\newline
  \emph{\textbf{Input:}} Binary partition $\lambda$ of $n$.
  \\
\vspace{-.3cm}
  \\
 \emph{\textbf{Procedure:}} If $n = 1$, let $T$ be the tree with one vertex, and let $w$ be the unique element of $A(T)$.
 \\
 Otherwise, pick a subdivision $(\lambda^1,\lambda^2)$ from
 $\{(\lambda^1,\lambda^2) \colon \lambda^1 \cup \lambda^2 = \lambda \}
 \cup \{(\lambda/2,\lambda/2) \}$, where $(\lambda^1,\lambda^2)$ is
 chosen with probability proportional to $\q_{\lambda^1}\q_{\lambda^2}$
 and $(\lambda/2,\lambda/2)$ with probability proportional to
 $\q_{\lambda/2}$.
 \begin{itemize}
 \item  If $\lambda^1,\lambda^2 \neq \lambda/2$, use the algorithm recursively to produce trees $T_1,T_2$ and permutations $w_1 \in A(T_1)_{\lambda^1}$, $w_2 \in A(T_2)_{\lambda^2}$. If necessary, switch $T_1 \leftrightarrow T_2$, $w_1 \leftrightarrow w_2$ so that $T_1 \geq T_2$. Let $T = (T_1,T_2)$, $w = w_1w_2$.
   \\
 \item  If $\lambda^1 = \lambda^2 = \lambda/2$, use the algorithm recursively to produce a tree $T_1$ and a permutation $w_2 \in A(T_1)_{\lambda/2}$, and use Algorithm \ref{alg1} to produce a permutation $w_1 \in A(T_1)$. Let $T = (T_1,T_1)$ and $w = \pi w_1 \pi w_1^{-1} \pi w_2$.
\end{itemize}
\vspace{.3cm}
 \emph{\textbf{Output:}} Binary tree $T$ and permutation $w \in A(T)_\lambda$.
\end{alg}

\bigskip

\begin{alg}[Random generation of tanglegrams] \label{alg3}
 ~\newline
 \vspace{-.3cm}
 ~\newline
  \emph{\textbf{Input:}} Integer $n$.
  \\
\vspace{-.3cm}
  \\
  \emph{\textbf{Procedure:}} Pick a random binary partition $\lambda$ of $n$ with probability proportional to $z_\lambda \q_\lambda^2$
%% (SB added next line on 6/23)
  where $t_n = \sum z_\lambda \q_\lambda^2$.
  Use Algorithm \ref{alg2} twice to produce random trees $T$ and $S$ and permutations $u \in A(T)_\lambda$, $v \in A(S)_\lambda$.   Among the permutations $w$ for which $u = w v w^{-1}$, pick one at random
%% (SB added next line on 6/23)
  from the $z_\lambda$ possibilities.
  \\
\vspace{-.3cm}
  \\
 \emph{\textbf{Output:}}  Binary trees $T$ and $S$ and double coset $A(T)wA(S)$, or equivalently $(T,w,S)$.
\end{alg}

\bigskip

\begin{alg}[Random generation of $T \in B_n$] \label{alg4}
~\newline
\vspace{-.3cm}
~\newline
\emph{\textbf{Input:}} Integer $n$.
  \\
\vspace{-.3cm}
\\
  \emph{\textbf{Procedure:}} Pick a random binary partition $\lambda$
  of $n$ with probability proportional to $\q_\lambda$.  Use Algorithm
  \ref{alg2} to produce a random tree $T$ (and a permutation $u \in
  A(T)_\lambda$).
\\
\vspace{-.3cm}
\\
  \emph{\textbf{Output:}} Binary tree $T$.
\end{alg}

Algorithm~\ref{alg4} is not the first of its kind, see also \cite{Furnas1984-yz}.

\bigskip

\begin{alg}[Random generation of tangled chains] \label{alg5}
 ~\newline
 \vspace{-.3cm}
 ~\newline
  \emph{\textbf{Input:}} Positive integers $k$ and $n$.
  \\
\vspace{-.3cm}
  \\
  \emph{\textbf{Procedure:}} Pick a random binary partition $\lambda$ of $n$ with probability proportional to $z_\lambda^{k-1} \q_\lambda^{k}$
%% (SB added next line on 6/23)
  where  $t(k,n) = \sum z_\lambda^{k-1} \q_\lambda^{k}$.
 Use Algorithm \ref{alg2} $k$ times to produce random trees $T_{i}$ and permutations $u_{i} \in A(T_{i})_\lambda$ for $i=1,\ldots ,k$.
Among the permutations $w_{i}$ for which $u_{i} = w_{i} u_{i+1} w_{i}^{-1}$, pick one uniformly at random for each $i=1,\ldots ,k-1$.
  \\
\vspace{-.3cm}
  \\
 \emph{\textbf{Output:}}  $(T_{1},\dots , T_{k})$ and $(w_{1},\ldots, w_{k-1})$.
\end{alg}

\bigskip

\begin{theorem} For  any positive integer $n$, the following hold.
  \begin{itemize}
\item  Algorithm \ref{alg1} produces every permutation $w \in A(T)$ with probability $\frac 1{|A(T)|}$.
\\
\item  Algorithm \ref{alg2} produces every pair $(T,w)$, where $w \in A(T)_\lambda$, with probability $\frac{1}{|A(T)| \cdot \q_\lambda}$.
  \\
\item Algorithm \ref{alg3} produces every tanglegram with probability $\frac 1{t_n}$.
  \\
\item  Algorithm \ref{alg4} produces every inequivalent binary tree with probability $\frac 1{b_n}$.
\\
\item  Algorithm \ref{alg5} produces every tangled chain of length $k$ of trees in $B_{n}$  with probability $\frac 1{t(k,n)}$.
  \end{itemize}
  \end{theorem}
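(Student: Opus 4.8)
The plan is to prove the five statements in the order listed, since each algorithm calls the earlier ones as subroutines, and to combine the inductive analysis with the enumerative identities already available: Proposition~\ref{prop} in the form $\sum_{T\in B_n}|A(T)_\lambda|/|A(T)|=\q_\lambda$, together with $t_n=\sum_\lambda z_\lambda\q_\lambda^2$ from \eqref{eqn:tn.form.1}, $b_n=\sum_\lambda\q_\lambda$ from Theorem~\ref{thm:trees}, and $t(k,n)=\sum_\lambda z_\lambda^{k-1}\q_\lambda^k$ from Theorem~\ref{thm:tchains}. The correctness of Algorithm~\ref{alg1} I would settle first by a routine induction on $n$: the base case is trivial, and in the inductive step, if $T_1\neq T_2$ then $A(T)=A(T_1)\times A(T_2)$ and $w=w_1w_2$ appears with probability $|A(T_1)|^{-1}|A(T_2)|^{-1}=|A(T)|^{-1}$, while if $T_1=T_2$ then $A(T)=A(T_1)\wr\Z_2$ has order $2|A(T_1)|^2$, every element is uniquely $w_1w_2$ or $\pi w_1w_2$, and the coin flip supplies the missing factor $1/2$.

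The heart of the argument is Algorithm~\ref{alg2}, which I would prove by induction on $n$, the key input being that \eqref{pind} makes the total weight of the subdivision distribution exactly $\sum_{\lambda^1\cup\lambda^2=\lambda}\q_{\lambda^1}\q_{\lambda^2}+\q_{\lambda/2}=2\q_\lambda$. Fixing a target $(T,w)$ with $w\in A(T)_\lambda$ and writing $T=(T_1,T_2)$ with $T_1\geq T_2$, I would distinguish two cases. If $T_1>T_2$, then $w$ factors uniquely as $w_1w_2$ with $w_i\in A(T_i)$ of some types $\lambda^1,\lambda^2$, and $(T,w)$ is reached by the first bullet in exactly two ways — producing the two factors in either order, the wrong order being corrected by the sorting step — so two equal contributions sum, using $|A(T)|=|A(T_1)||A(T_2)|$, to $1/(|A(T)|\q_\lambda)$. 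If $T_1=T_2=:T_0$, the element $w$ is either non-flip, $w=w_1w_2$ of type $\lambda^1\cup\lambda^2=\lambda$ and reached by the first bullet in a single way (no sorting swap occurs when the subtrees coincide), or flip, of type $\lambda=2(\lambda/2)$ and reached by the second bullet. Since $|A(T)|=2|A(T_0)|^2$, the non-flip probability $\frac{\q_{\lambda^1}\q_{\lambda^2}}{2\q_\lambda}\cdot\frac{1}{|A(T_0)|^2\q_{\lambda^1}\q_{\lambda^2}}$ and the flip probability $\frac{\q_{\lambda/2}}{2\q_\lambda}\cdot\frac{1}{|A(T_0)|\q_{\lambda/2}}\cdot\frac{1}{|A(T_0)|}$ (the last factor from Algorithm~\ref{alg1}) both collapse to $1/(|A(T)|\q_\lambda)$.

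The step I expect to be the main obstacle is justifying that the flip branch hits each flip element exactly once. Here I would expand the construction $w=\pi w_1\pi w_1^{-1}\pi w_2$ as $w=\pi\sigma$ with $\sigma=(w_1w_2,\,w_1^{-1})$ (first and second copy actions), observe that a flip element $\pi(a,b)$ has type $\lambda$ precisely when $ba$ has type $\lambda/2$, and check that $(w_1,w_2)\mapsto(w_1w_2,w_1^{-1})$ is a bijection from $A(T_0)\times A(T_0)_{\lambda/2}$ onto exactly those pairs $(a,b)$; this bijectivity is what guarantees no flip element is over- or under-counted and makes the collapse above valid.

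With Algorithms~\ref{alg1} and~\ref{alg2} in hand, the remaining three follow formally. For Algorithm~\ref{alg4} the output probability telescopes, $\sum_\lambda\frac{\q_\lambda}{b_n}\cdot\frac{|A(T)_\lambda|}{|A(T)|\q_\lambda}=\frac{1}{b_n}\sum_\lambda\frac{|A(T)_\lambda|}{|A(T)|}=\frac{1}{b_n}$. For Algorithms~\ref{alg3} and~\ref{alg5}, I would first note the crucial cancellation: because the $\lambda$-weights are proportional to $z_\lambda\q_\lambda^2$ (resp.\ $z_\lambda^{k-1}\q_\lambda^k$), every admissible internal tuple $(T,u,S,v,w)$ (resp.\ $(T_1,u_1,\ldots,T_k,u_k,w_1,\ldots,w_{k-1})$) is produced with the same probability $\frac{1}{t_n|A(T)||A(S)|}$ (resp.\ $\frac{1}{t(k,n)\prod_i|A(T_i)|}$), independent of $\lambda$ and of the permutations. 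It then remains to count the tuples mapping to a fixed output. For tanglegrams, for each $w$ the number of valid $(u,v)$ is $|A(T)\cap wA(S)w^{-1}|$, this is constant on the double coset $A(T)wA(S)$, and the double-coset size formula $|A(T)wA(S)|=|A(T)|\,|A(S)|/|A(T)\cap wA(S)w^{-1}|$ recalled in Section~\ref{proof} gives a total of $|A(T)||A(S)|$ tuples per tanglegram, hence probability $1/t_n$. The multicoset analogue, using that the number of admissible $(u_1,\ldots,u_k)$ for fixed $(w_1,\ldots,w_{k-1})$ equals $|A(\mctw)|$ and the size formula for $\mctw$ from the proof of Theorem~\ref{thm:tchains}, gives $\prod_i|A(T_i)|$ tuples per tangled chain and probability $1/t(k,n)$.
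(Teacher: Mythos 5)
Your proposal is correct and follows essentially the same route as the paper: induction through the recurrence \eqref{pind} for Algorithms~\ref{alg1} and~\ref{alg2}, the telescoping sum over $\lambda$ for Algorithm~\ref{alg4}, and the double-coset (resp.\ multicoset) size formula for Algorithms~\ref{alg3} and~\ref{alg5}. The only differences are presentational: you reorganize the Algorithm~\ref{alg3} computation as a constant probability on internal tuples times a fiber count rather than the paper's chain of conditional probabilities, and you make explicit the bijection behind the flip branch that the paper only asserts in the proof of Proposition~\ref{prop}.
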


\begin{proof}
 The first two proofs are by induction, with the case $n=1$ being
 obvious.  The induction for Algorithm \ref{alg1} is trivial.

For Algorithm \ref{alg2}, say that we are given a binary partition
$\lambda$, a tree $T$ with $n=|\lambda|$ leaves, and $w \in
A(T)_\lambda$.  We compute the probability that Algorithm \ref{alg2}
produces $T$ and $w$. Assume first that $T_1 > T_2$ are the subtrees
of $T$. In particular, that means that $w$ can be written uniquely as
$w_1 w_2$, where $w_1 \in A(T_1)$ and $w_2 \in A(T_2)$. Say that $w_i$
is of type $\lambda^i$; we must have $\lambda = \lambda^1 \cup
\lambda^2$. If $\lambda^1 \neq \lambda^2$, there are two ways in which
Algorithm \ref{alg2} can produce $(T,w)$: either we partition
$\lambda$ into $(\lambda^1,\lambda^2)$, and then the algorithm
produces $(T_1,w_1)$ and $(T_2,w_2)$, or we partition $\lambda$ into
$(\lambda^2,\lambda^1)$, then the algorithm produces $(T_2,w_2)$ and
$(T_1,w_1)$, and finally switches $T_1 \leftrightarrow T_2$, $w_1
\leftrightarrow w_2$.  Since $T_{1}$ and $T_{2}$ are chosen
independently, we can apply \eqref{pind} and induction to obtain the probability that $(T,w)$ is chosen, namely
 $$2 \cdot \frac{\q_{\lambda^1}\q_{\lambda^2}}{2\q_\lambda} \cdot
 \frac{1}{|A(T_1)| \cdot \q_{\lambda^1}} \cdot \frac{1}{|A(T_2)| \cdot
   \q_{\lambda^2}} = \frac 1{|A(T_1)|\cdot |A(T_2)| \cdot \q_\lambda} =
 \frac 1 {|A(T)| \cdot \q_\lambda}.
 $$
 If $\lambda^1 = \lambda^2$, but $T_{1} \neq T_{2}$, there
 are again two ways in which Algorithm \ref{alg2} can produce $(T,w)$:
 we must partition $\lambda$ into $(\lambda^1,\lambda^1)$, and then it
 can either produce $(T_1,w_1)$ and $(T_2,w_2)$ or $(T_2,w_2)$ and
 $(T_1,w_1)$; in the latter case it switches $T_1 \leftrightarrow
 T_2$, $w_1 \leftrightarrow w_2$. Similarly, the probability is  $\frac{1}{|A(T)| \cdot \q_\lambda}$.

 Now assume that $T_1 = T_2$.  Either $w$ can be written as
 $w_1 w_2$, where $w_1 \in A(T_1)_{\lambda^1}$ and $w_2 \in
 A(T_2)_{\lambda^2}$, or as $\pi w_2 \pi w_2^{-1} \pi w_1$, where $w_1
 \in A(T_1)_{\lambda/2}$ and $w_2 \in A(T_1)$. In the first case,
 $(T,w)$ is produced with probability
 $$\frac{\q_{\lambda^1}\q_{\lambda^2}}{2\q_\lambda} \cdot \frac{1}{|A(T_1)| \cdot \q_{\lambda^1}} \cdot \frac{1}{|A(T_1)| \cdot \q_{\lambda^2}} = \frac 1{2 \cdot |A(T_1)|^2 \cdot \q_\lambda} = \frac 1 {|A(T)| \cdot \q_\lambda}.$$
 In the second case, it is produced with probability
 $$\frac{\q_{\lambda/2}}{2\q_\lambda} \cdot \frac{1}{|A(T_1)| \cdot
   \q_{\lambda/2}} \cdot \frac{1}{|A(T_1)|} = \frac 1{2 \cdot
   |A(T_1)|^2 \cdot \q_\lambda} = \frac 1 {|A(T)| \cdot \q_\lambda}.$$
 This finishes the case for Algorithm~\ref{alg2}.

The proof of the statement for Algorithm \ref{alg3} is essentially
just a rewriting of the proof from Section \ref{proof}; we include it
for completeness. We are given $n$ and a tanglegram $(T,w,S)$ with $T$
and $S$ binary trees with $n$ leaves, $C=A(T)wA(S)$ the double coset containing
$w$ with respect to $A(T)$ and $A(S)$, and we want to prove that
$P(T,S,C)$, the probability that this triple is produced by Algorithm
\ref{alg3}, is $1/t_n$.

 We proved that $\sum z_\lambda \q_\lambda^2 = t_n$, so the probability of choosing a binary partition $\lambda$ is $z_\lambda \q_\lambda^2/t_n$. So we have
 $$P(T,S,C) = \sum_\lambda \frac{z_\lambda \q_\lambda^2}{t_n}
 P(T,S,C|\lambda),$$ where $P(T,S,C|\lambda)$ is the conditional
 probability that $(T,S,C)$ is produced if $\lambda$ is chosen. We can
 further condition the probability: $P(T,S,C|\lambda) = \sum
 P(T,S,C|u,v,T,S,\lambda) \cdot P(u,v,T,S|\lambda)$, where the sum is
 over $u \in A(T)_\lambda$, $v \in A(S)_\lambda$. Furthermore,
 $$P(T,S,C|u,v,T,S,\lambda) = P(C|u,v) \text{\ and \ } P(u,v,T,S|\lambda) =  P(T,u|\lambda) \cdot P(S,v|\lambda),$$
and so
 \begin{align*}
 P(T,S,C) &= \sum_\lambda \frac{z_\lambda \q_\lambda^2}{t_n} \sum_{u \in A(T)_\lambda} \sum_{v \in A(S)_\lambda} P(C|u,v) \cdot \frac{1}{|A(T)| \cdot \q_\lambda} \cdot \frac{1}{|A(S)| \cdot \q_\lambda}\\
 &= \frac 1{t_n} \cdot \sum_\lambda \frac{z_\lambda}{|A(T)| \cdot |A(S)|} \cdot {\sum_{u \in A(T)_\lambda} \sum_{v \in A(S)_\lambda} \frac{|C \cap B^{u,v}|}{|B^{u,v}|}},
 \end{align*}
 where $B^{u,v} = \{w \in \s_n \colon u = w v w^{-1}\}$. We know that $|B^{u,v}| = z_\lambda$, so
 \begin{align*}
 P(T,S,C) &= \frac 1{t_n} \cdot \sum_\lambda \frac{1}{|A(T)| \cdot |A(S)|} \sum_{u \in A(T)_\lambda} \sum_{v \in A(S)_\lambda} \sum_{w \in C} \llbracket u = wvw^{-1} \rrbracket \\
 &= \frac 1{t_n} \cdot  \sum_{w \in C} \sum_\lambda \frac{1}{|A(T)| \cdot |A(S)|} \sum_{u \in A(T)_\lambda} \sum_{v \in A(S)_\lambda} \llbracket u = wvw^{-1} \rrbracket \\
 &= \frac 1{t_n} \cdot  \sum_{w \in C} \sum_\lambda \frac{|A(T)_\lambda \cap wA(S)_\lambda w^{-1}|}{|A(T)| \cdot |A(S)|} = \frac 1{t_n} \cdot  \sum_{w \in C} \frac{|A(T) \cap wA(S) w^{-1}|}{|A(T)| \cdot |A(S)|} \\
 &= \frac 1{t_n} \cdot  \sum_{w \in C} \frac 1{|C_w|} = \frac 1{t_n}.
 \end{align*}

 Finally, let us prove the statement for Algorithm \ref{alg4}. We have
 $$P(T) = \sum_{\lambda} P(T|\lambda) \cdot P(\lambda) =
 \sum_{\lambda} \frac{|A(T)_\lambda|}{|A(T)| \cdot \q_\lambda}
 \cdot \frac{\q_\lambda}{b_n}= \frac 1{b_n} \cdot \frac{\sum_\lambda
   |A(T)_\lambda|}{|A(T)|}= \frac 1{b_n},
 $$
 which proves that
 Algorithm \ref{alg4} produces every inequivalent binary tree with the
 same probability.  The proof for  Algorithm \ref{alg5} is similar to Algorithms~\ref{alg3} and~\ref{alg4} so we omit the formal proof.
\end{proof}

\section{Asymptotic expansion of $t_n$}\label{sec:asymptotics}

In this section, we use Theorem~\ref{thm:1} to obtain another formula
for $t_n$ and several formulas to approximate $t_n$ for large $n$.

\begin{cor} We have
\begin{equation} \label{equiv}
t_n = \frac{\cat_{n-1}^2n!}{4^{n-1}} \sum_\mu \frac{n(n-1)\cdots(n-|\mu|+1)}{z_\mu \cdot \prod_{i=1}^{\ell(\mu)} \prod_{j=1}^{\mu_i-1} (2n-2(\mu_1+\cdots+\mu_{i-1})-2j-1)^2},
\end{equation}
where the sum is over binary partitions $\mu$  with all parts
equal to a positive power of $2$ and $|\mu|\leq n$ including the empty partition
in which case the summand is 1.
\end{cor}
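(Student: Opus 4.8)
The plan is to begin from Theorem~\ref{thm:1}, which expresses $t_n$ as a sum over binary partitions $\lambda$ of $n$ of the quantity $z_\lambda \q_\lambda^2$, where
$$\q_\lambda = \frac{\prod_{i=2}^{\ell(\lambda)}\left(2(\lambda_i+\cdots+\lambda_{\ell(\lambda)})-1\right)}{z_\lambda}.$$
The claimed formula \eqref{equiv} reindexes this same sum by a change of variable on partitions. The key observation is that a binary partition $\lambda$ of $n$ is built from $1$'s together with parts that are positive powers of $2$; if I strip away the $1$'s, I am left with a binary partition $\mu$ all of whose parts are positive powers of $2$ with $|\mu|\le n$, and conversely any such $\mu$ extends uniquely to a binary partition of $n$ by appending $n-|\mu|$ parts equal to $1$. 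This gives the bijection $\lambda\leftrightarrow\mu$ underlying the reindexing in \eqref{equiv}, with the empty $\mu$ corresponding to $\lambda=1^n$.

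**Core computation.** First I would pull out the overall factor $\cat_{n-1}^2 n!/4^{n-1}$ and verify it matches the $\mu=\emptyset$ (i.e.\ $\lambda=1^n$) term. By the remark following Proposition~\ref{prop}, the $\lambda=1^n$ term contributes $z_{1^n}\q_{1^n}^2 = n!\cdot\left((2n-3)!!/n!\right)^2 = ((2n-3)!!)^2/n!$, and since $\cat_{n-1}=2^{n-1}(2n-3)!!/n!$ one checks directly that $\cat_{n-1}^2 n!/4^{n-1} = ((2n-3)!!)^2/n!$, matching the summand $1$ for $\mu=\emptyset$. Next, for a general $\lambda$ with associated $\mu$, I would compute the ratio
$$\frac{z_\lambda\q_\lambda^2}{\cat_{n-1}^2 n!/4^{n-1}} = \frac{\prod_{i=2}^{\ell(\lambda)}\left(2(\lambda_i+\cdots+\lambda_{\ell(\lambda)})-1\right)^2}{z_\lambda}\cdot\frac{4^{n-1}}{((2n-3)!!)^2}.$$
The product in the numerator runs over the partial sums $\lambda_i+\cdots+\lambda_{\ell(\lambda)}$; since the trailing $1$'s all contribute factors of the form $(2m-1)^2$ for consecutive $m$, the product over the tail telescopes against $((2n-3)!!)^2$ in the denominator. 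The factors that survive are precisely those governed by the large parts coming from $\mu$, and organizing the surviving factors by the parts of $\mu$ yields exactly the product $\prod_{i=1}^{\ell(\mu)}\prod_{j=1}^{\mu_i-1}(2n-2(\mu_1+\cdots+\mu_{i-1})-2j-1)^2$ in \eqref{equiv}. Simultaneously, the factor $z_\lambda$ in the denominator, after the $1$-parts are separated out, produces $z_\mu$ together with the falling factorial $n(n-1)\cdots(n-|\mu|+1)$ in the numerator, since the multiplicity of $1$ in $\lambda$ is $n-|\mu|$ and $m_0(\lambda)! = (n-|\mu|)!$ combines with the remaining factorials via the identity $n!/(n-|\mu|)! = n(n-1)\cdots(n-|\mu|+1)$.

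**Main obstacle and bookkeeping.** The hard part will be the careful bookkeeping of which factors of $(2n-3)!! = \prod_{m=1}^{n-1}(2m-1)$ get cancelled and which survive, since the partial sums $\lambda_i+\cdots+\lambda_{\ell(\lambda)}$ depend on the full arrangement of parts of $\lambda$ in decreasing order, and the reindexing must correctly track the offsets $\mu_1+\cdots+\mu_{i-1}$ appearing in \eqref{equiv}. To handle this cleanly I would write $\lambda$ in decreasing order as its large parts (the parts of $\mu$) followed by the block of $1$'s, compute the partial sums explicitly, and observe that as $i$ ranges over the positions of the $1$-block the partial sums $\lambda_i+\cdots+\lambda_{\ell(\lambda)}$ run through $1,2,\ldots,n-|\mu|$, giving the factor $((2(n-|\mu|)-1)!!)^2 = (\prod_{m=1}^{n-|\mu|}(2m-1))^2$; meanwhile for each large part $\mu_i$ the $\mu_i-1$ intermediate partial sums (as one moves across that part, distinguishing the contributions of its constituent summands) contribute the inner product over $j$. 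The remaining step is a direct but delicate verification that the denominators $z_\lambda$ versus $z_\mu$ differ exactly by the factor $(n-|\mu|)!$ absorbed into the falling factorial, after which \eqref{equiv} follows term by term from Theorem~\ref{thm:1}.
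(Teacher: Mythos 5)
Your proposal is correct and follows essentially the same route as the paper's proof: strip the parts equal to $1$ from $\lambda$ to obtain $\mu$, use $z_\lambda = z_\mu\,(n-|\mu|)!$, rewrite the suffix-sum product as $(2n-3)!!$ divided by the factors skipped as the partial sums jump across each large part, and reassemble. (One small slip: in your displayed ratio the factor $4^{n-1}/((2n-3)!!)^2$ should be $n!/((2n-3)!!)^2$, but your later bookkeeping via $n!/(n-|\mu|)! = n(n-1)\cdots(n-|\mu|+1)$ shows you are using the correct quantity.)
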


\begin{proof}
Every binary partition $\lambda$ of size $n$ can be expressed as $\mu
1^{n - |\mu|}$, where all parts of $\mu$ are at least $2$. We have
$z_\lambda = z_\mu (n-|\mu|)!$ and
\begin{align*}
\prod_{i=2}^{\ell(\lambda)} \left(2(\lambda_i+\cdots+\lambda_{\ell(\lambda)})-1\right) =& \prod_{i=1}^{\ell(\lambda)-1} \left(2(n-\lambda_1-\cdots-\lambda_{i})-1\right) \\
  =&\prod_{i=1}^{\ell(\mu)-1} \left(2(n-\mu_1-\cdots-\mu_{i})-1\right) \cdot (2n-2|\mu|-1)!!\\
  =& \frac{(2n-3)!!}{\prod_{i=1}^{\ell(\mu)} \prod_{j=1}^{\mu_i-1} (2n-2(\mu_1+\cdots+\mu_{i-1})-2j-1)}.
\end{align*}

Since $(2n-3)!!/n! = \cat_{n-1}/2^{n-1}$,
\eqref{equiv} is an equivalent way to express
the number of tanglegrams.
\end{proof}

The first
 few terms of the sum corresponding to
 partitions $\emptyset$, $(2)$, $(4)$, $(2,2)$, $(4,2)$, $(2,2,2)$,
 $(8)$ are
\begin{align*}
1 &+ \frac{n(n-1)}{2(2n-3)^2} + \frac{n(n-1)(n-2)(n-3)}{4(2n-3)^2(2n-5)^2(2n-7)^2} + \frac{n(n-1)(n-2)(n-3)}{8(2n-3)^2(2n-7)^2} \\
& + \frac{n(n-1)(n-2)(n-3)(n-4)(n-5)}{8(2n-3)^2(2n-5)^2(2n-7)^2(2n-11)^2} + \frac{n(n-1)(n-2)(n-3)(n-4)(n-5)}{48(2n-3)^2(2n-7)^2(2n-11)^2} \\
&+ \frac{n(n-1)(n-2)(n-3)(n-4)(n-5)(n-6)(n-7)}{8(2n-3)^2(2n-5)^2(2n-7)^2(2n-9)^2(2n-11)^2(2n-13)^2(2n-15)^2}.
\end{align*}

\begin{cor} \label{asymptotic}
 We have
 $$\frac{t_n}{n!} \sim \frac{e^{\frac 1 8}\cat_{n-1}^2}{4^{n-1} } \sim \frac{e^{\frac 1 8} 4^{n-1}}{\pi n^3} \qquad \mbox{and} \qquad
t_n \sim \frac { 2^{2 n - \frac 3 2} \cdot n^{n - \frac 5 2}}{\sqrt \pi \cdot e^{n-\frac 1 8}}.$$
We can also compute approximations of higher degree. For example, we have
\begin{align*}
t_n &= \frac{e^{\frac 1 8}\cat_{n-1}^2 n!}{4^{n-1}} \cdot \left( 1 + \frac{1}{4 \: n} + \frac{137}{256 \: n^2} + \frac{1285}{1024 \: n^3} + \frac{456017}{131072 \: n^4} + \frac{6140329}{524288 \: n^5} + O\left(n^{-6}\right) \right) \\
&= \frac {2^{2 n - \frac 3 2} \cdot n^{n - \frac 5 2}}{\sqrt \pi \cdot e^{n-\frac 1 8}} \cdot \left( 1 + \frac{13}{12 \: n} + \frac{3089}{2304 \: n^2} + \frac{931423}{414720 \: n^3} + \frac{826301423}{159252480 \: n^4} + \frac{211060350013}{13377208320 \: n^5} +  O\left(n^{-6}\right) \right).
\end{align*}
\end{cor}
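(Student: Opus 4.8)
The plan is to analyze the sum in \eqref{equiv}. Write $t_n = \frac{\cat_{n-1}^2\,n!}{4^{n-1}}\,\Sigma_n$, where $\Sigma_n=\sum_\mu a_\mu(n)$ and $a_\mu(n)$ denotes the summand of \eqref{equiv}, indexed by binary partitions $\mu$ with all parts a positive power of $2$ (including $\mu=\emptyset$, where $a_\emptyset(n)=1$). For each fixed $n$ this is a \emph{finite} sum, since only $\mu$ with $|\mu|\le n$ contribute. The rational function $a_\mu(n)$ has numerator of degree $|\mu|$ and denominator of degree $2(|\mu|-\ell(\mu))$, so $a_\mu(n)=O\bigl(n^{-(|\mu|-2\ell(\mu))}\bigr)$ with $|\mu|-2\ell(\mu)=\sum_i(\mu_i-2)\ge 0$; this exponent vanishes exactly when every part equals $2$. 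Thus only the partitions $(2^k)$ survive in the limit, and a direct computation gives $a_{(2^k)}(n)\to 1/(8^k\,k!)$.

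First I would prove $\Sigma_n\to e^{1/8}$ by a dominated-convergence (Tannery) argument. Writing $a_{(2^k)}(n)=\frac{1}{2^k k!}\prod_{i=0}^{k-1}\frac{(n-2i)(n-2i-1)}{(2n-4i-3)^2}$ and setting $x=2n-4i-3\ge 1$ (which holds throughout the range because $2k\le n$), each factor equals $\frac{(x+3)(x+1)}{4x^2}\le 2$, so $a_{(2^k)}(n)\le 1/k!$ uniformly in $n$, a majorant summing to $e$. Hence $\sum_k a_{(2^k)}(n)\to\sum_k 1/(8^k k!)=e^{1/8}$, while the partitions with a part $\ge 4$ vanish termwise and are controlled by a separate summable majorant. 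This gives $\Sigma_n\to e^{1/8}$, hence $t_n/n!\sim e^{1/8}\cat_{n-1}^2/4^{n-1}$. Feeding in $\cat_{n-1}\sim 4^{n-1}/(\sqrt\pi\,n^{3/2})$ yields the middle relation, and Stirling's formula for $n!$ then gives $t_n\sim 2^{2n-3/2}n^{n-5/2}/(\sqrt\pi\,e^{n-1/8})$ after simplifying $4^{n-1}\sqrt2=2^{2n-3/2}$.

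For the higher-order expansions I would upgrade this limit to a full asymptotic series $\Sigma_n\sim\sum_{j\ge 0}c_j\,n^{-j}$. Each $a_\mu(n)$, being rational in $n$ with denominator roots of size $O(n)$, admits a convergent expansion in $1/n$ beginning at order $n^{-\sum_i(\mu_i-2)}$, so $c_j$ receives contributions only from partitions with $\sum_i(\mu_i-2)\le j$, i.e.\ from the all-$2$ family together with finitely many ``shapes'' carrying parts $\ge 4$, each shape still augmentable by arbitrarily many $2$'s. Thus every $c_j$ is itself a convergent series (for $j=0$ it sums to $e^{1/8}$, and the paper's value $c_1=e^{1/8}/4$ is the sum over the all-$2$ family), summable by the same uniform bounds. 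Multiplying the resulting series $\Sigma_n/e^{1/8}=1+\tfrac1{4n}+\tfrac{137}{256 n^2}+\cdots$ by the Stirling series of $\cat_{n-1}^2\,n!/4^{n-1}$ (expanded about $n$) produces both displayed expansions, the individual coefficients being a finite computation at each order.

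The main obstacle is the uniform control needed to interchange the $1/n$-expansion with the growing sum over partitions, that is, producing a single summable majorant $M_\mu$ with $a_\mu(n)\le M_\mu$ for all admissible $n$, together with its quantitative refinement bounding the remainder after $m$ terms uniformly in $\mu$. The delicate case is partitions containing a large part $2^s$: the naive per-factor estimate loses too much (summing it over all binary partitions diverges), and one must instead exploit that such a part forces $|\mu|$, and hence $n$, to be large and the double-factorial denominator $\bigl((2n-3)!!/(2n-2|\mu|-1)!!\bigr)^2$ to dominate, so that these terms are not merely $o(1)$ but summably small. Once this majorant and its remainder version are in hand, Tannery's theorem and term-by-term expansion of the series complete the proof.
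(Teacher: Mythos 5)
Your proposal follows essentially the same route as the paper's own (sketched) proof: extract the leading behaviour of each summand of \eqref{equiv} as $n^{-(|\mu|-2\ell(\mu))}/(2^{2(|\mu|-\ell(\mu))}z_\mu)$, observe that only the all-$2$ partitions contribute at leading order and sum to $e^{1/8}$, obtain higher-order coefficients from partitions of type $4\cdot 2^k$, $4^2\cdot 2^k$, $8\cdot 2^k$, etc., and finish with Stirling. You are in fact somewhat more careful than the paper, which silently interchanges the limit with the growing sum over partitions; your Tannery argument with the majorant $a_{(2^k)}(n)\le 1/k!$ and your explicit flagging of the remaining uniformity issue for partitions with parts $\ge 4$ supply exactly the justification the published sketch omits.
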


\bigskip

\begin{proof}[Sketch of proof]

The crucial observation is that
 $$\frac{n(n-1)\cdots(n-|\mu|+1)}{z_\mu \cdot \prod_{i=1}^{\ell(\mu)} \prod_{j=1}^{\mu_i-1} (2n-2(\mu_1+\cdots+\mu_{i-1})-2j-1)^2} \sim \frac{n^{|\mu|}}{z_\mu \cdot (2n)^{2(|\mu|-\ell(\mu))}} = \frac{1}{2^{2(|\mu|-\ell(\mu))}\cdot z_\mu \cdot n^{|\mu|-2\ell(\mu)}}.
 $$

So, to find an asymptotic approximation of order $O(n^{-2m})$ or
 $O(n^{-2m-1})$, we only have to consider partitions $\mu$
 with $|\mu|-2\ell(\mu) \leq 2m$ in Equation~\eqref{equiv}.
 For $m = 0$, we only consider partitions of the type $22\cdots 2$.
The contribution of $\mu = 2^k$ is $1/(2^{2k} 2^k k!)$, and the sum converges to $\sum_k \frac{1}{2^{3k}k!} = e^{\frac 1 8}$.

 Similarly, the coefficient of $n^{-1}$ can be obtained by considering
 the coefficient of $n^{-1}$ in each of these terms, and the higher terms by
 considering in turn partitions of type $42^k$, $4^22^k$, $4^32^k$,
 $82^k$, etc. The last expansion is obtained by considering the
 asymptotic expansions of $\cat_{n-1}$ and $n!$.
 %% todo: figure out last paragraph and verify the hairy rational function for the first few terms.
\end{proof}

\section{A recurrence for enumerating  tanglegrams and tangled chains}\label{sec:recurrence}

In this section, we give a recurrence for computing $t_{n}$.  Recall
that for each nonempty binary partition $\lambda$, we can construct
its \emph{multiplicity vector} $m^{\lambda}=(m_{0}, m_{1},
m_{2},m_{3}, \ldots)$ where $m_{i}$ is the number of times $2^{i}$
occurs in $\lambda$.  The map $\lambda \mapsto m^{\lambda}$ is a
bijection from binary partitions to vectors of nonnegative integers
with only finitely many nonzero entries.  The quantity $z_{\lambda}$
for a binary partition $\lambda$ is easily expressed in terms of the
multiplicities in $m^{\lambda}$ as
\[
z_{\lambda} =  \prod_{h\geq 0}2^{h\cdot m_{h}}\ m_{h}!
=  \prod_{\substack{h\geq 0\\m_h \neq 0}} \prod_{j=1}^{m_{h}}j\cdot 2^{h}
\]

We will use the functions
\begin{equation}\label{e:factor}
f^2(s) := (2s-1)^{2},
\end{equation}

\begin{equation}\label{e:contrib}
c(h,m,s) := \prod_{j=1}^{m} \frac{f^2(s + j\cdot 2^{h})}{j\cdot 2^{h}},
\end{equation}
and
\begin{equation}\label{e:rec.f}
r(h,n,s) :=  \sum_{\substack{ m=0\\ (n-m)\ \mathrm{even}
}}^{n} c(h,m,s)\  r\!\left( h+1,\frac{n-m}{2},s+m2^h\right)
\end{equation}
with base cases
\begin{equation}\label{e:contrib.0}
c(h,0,s) =r(h,0,s)= 1.
\end{equation}

\bigskip

\begin{lemma}\label{lem:rec}
For $n\geq 1$,   the number of tanglegrams is $$t_n=\frac{r(0,n,0)}{f^2(n)},$$
which can be computed recursively using \eqref{e:rec.f}.
\end{lemma}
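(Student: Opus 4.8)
The plan is to show that the recursively defined quantity $r(0,n,0)$, when divided by $f^2(n)=(2n-1)^2$, reproduces exactly the sum $\sum_\lambda z_\lambda \q_\lambda^2$ appearing in Theorem~\ref{thm:1}. The guiding idea is that the recursion in \eqref{e:rec.f} walks through the multiplicity vector $m^\lambda=(m_0,m_1,m_2,\ldots)$ of a binary partition one coordinate at a time: the variable $h$ tracks which power $2^h$ we are currently choosing the multiplicity $m$ of, the variable $n$ tracks how many ``units'' (in terms of the current scale) remain to be distributed among larger parts, and the variable $s$ accumulates a running partial sum of the parts already committed. First I would unwind the definitions of $f^2$, $c$, and $r$ and verify that $c(h,m,s)$ is precisely the contribution to the summand from choosing $m_h=m$ copies of $2^h$.

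Concretely, I would set up a bijective bookkeeping between the nested sums in \eqref{e:rec.f} and the binary partitions of $n$. Reading the recursion top-down, the outer call $r(0,n,0)$ first selects $m_0=m$ copies of the part $1=2^0$ (the constraint that $n-m$ be even and the recursive halving $(n-m)/2$ encode that the remaining weight must be filled by even parts, which after dividing the scale by $2$ become an instance of the same problem), then recurses at level $h+1$ with the updated remaining count and the updated partial sum $s+m2^h$. I would argue by induction on the total number of levels needed (equivalently, on $\log_2$ of the largest part, or on $n$) that
$$
r(h,n,s)=\sum_\lambda \Big(\prod_{i} \frac{f^2(s+\text{partial sums})}{z\text{-factors}}\Big),
$$
where the sum is over binary partitions $\lambda$ whose rescaled size is $n$, matching term-by-term against the definition of $c$. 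The key identity to check is that the factor $f^2(s+j2^h)/(j2^h)$ accumulated in $c(h,m,s)$ correctly produces both the $(2(\lambda_i+\cdots+\lambda_{\ell})-1)^2$ numerators and the $z_\lambda$ denominators from \eqref{z}, since $z_\lambda=\prod_h 2^{h m_h}m_h!=\prod_{h,j} j2^h$ gives exactly the denominators $j\cdot 2^h$, and the running sum $s$ realizes the partial-sum arguments $\lambda_i+\cdots+\lambda_{\ell(\lambda)}$ of the products in Theorem~\ref{thm:1}.

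The one careful point is the indexing mismatch at the top of the product: Theorem~\ref{thm:1} starts the product at $i=2$, so the largest part $\lambda_1$ is \emph{excluded}, whereas the recursion naturally generates a factor for \emph{every} chosen part including the topmost one. This is exactly why the final answer is $r(0,n,0)/f^2(n)$ rather than $r(0,n,0)$ itself: the full recursion overcounts by one extra factor $f^2(n)=(2n-1)^2=f^2(\lambda_1+\cdots+\lambda_{\ell(\lambda)})$ corresponding to $i=1$, and dividing by $f^2(n)$ removes precisely this spurious top factor uniformly across all partitions. I would make this explicit by showing $r(0,n,0)=\sum_\lambda z_\lambda \q_\lambda^2 \cdot f^2(n)$, where the common factor $f^2(n)$ factors out because $\lambda_1+\cdots+\lambda_{\ell(\lambda)}=n$ for every binary partition $\lambda$ of $n$.

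I expect the main obstacle to be verifying the inductive step cleanly, namely confirming that the constraint ``$n-m$ even'' together with the halving $(n-m)/2\mapsto n$ and the scale shift $h\mapsto h+1$ sets up a faithful correspondence between binary partitions at one scale and binary partitions at the next. The subtlety is that a binary partition with some copies of $2^h$ removed is not literally a binary partition of $(n-m)/2$ until one rescales all remaining parts by dividing by $2$; I would need to check that this rescaling is a bijection onto binary partitions of the reduced size and that it transports the $z_\lambda$ and product factors correctly under the substitution $s\mapsto s+m2^h$. Once this scale-invariance of the recursion is pinned down, the base cases \eqref{e:contrib.0} handle the empty tail, and the result follows.
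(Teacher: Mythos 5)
Your proposal is correct and takes essentially the same route as the paper: the paper likewise multiplies through by $f^2(n)=(2n-1)^2$ to extend the product in Theorem~\ref{thm:1} to $i=1$, identifies $c(h,m_h,s^\lambda_h)$ as the contribution of the parts of size $2^h$ (with the running partial sum $s$ supplying the suffix-sum arguments and the denominators $j\cdot 2^h$ assembling $z_\lambda$), and unrolls \eqref{e:rec.f} via the bijection between binary partitions with $m$ parts of size $1$ and binary partitions of $(n-m)/2$. The points you flag as delicate (the spurious top factor $f^2(n)$ and the rescaling at each level) are exactly the two observations the paper's proof rests on.
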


\begin{proof}
% Picayune point from EM: I suggest (2n-1) to match def'n of f^2
Let $\ttt_{n} := (1-2n)^{2} t_{n}$.  By the main
formula

\begin{equation}\label{e:rec}
\ttt_n = \sum_\lambda \frac{\prod_{i=1}^{\ell(\lambda)}
\left(2(\lambda_i+\cdots+\lambda_{\ell(\lambda)})-1\right)^2}{z_\lambda}.
\end{equation}
where the sum is over binary partitions of $n$.

We will consider the contribution to \eqref{e:rec} from the parts of
the partition of size $2^{h}$ for each $h$ separately.  To do this we
will need to keep track of the partial sums of parts smaller than
$2^{h}$.  Let $s^{\lambda} = (s^{\lambda}_{0},s^{\lambda}_{1},\ldots)$
where $s^\lambda_{h}=\sum_{i=0}^{h-1} m_{i}2^{i}$ and
$s^\lambda_0=0$.  Then the contribution of the parts of size $2^{h}$
in $\lambda$ to the corresponding term in \eqref{e:rec} is the factor
$c(h,m_{h},s^\lambda_{h})$.  Using this notation, we have
\begin{equation}
\ttt_n = \sum_{m^{\lambda}=(m_{0}, m_{1},\ldots) \vdash  n}c(0,m_{0},0) c(1,m_{1},s^\lambda_{1})c(2,m_{2},s^\lambda_{2})\cdots
\end{equation}
where the sum is over binary partitions of $n$ represented by their
multiplicity vector.

Next consider the binary partitions with exactly $j$ parts of size
1.  Note $n-j$ must be even for this set to be nonempty.  The binary
partitions of $n$ with exactly $j$ parts equal to 1 are in bijection
with the binary partitions of $\frac{n-j}{2}$,  so

\begin{equation}
\ttt_n = \sum_{\substack{ m_{0}=0\\ (n-m_{0})\ \mathrm{even}}}^{n}c(0,m_{0},0) \sum_{( m_{1},m_{2},\ldots) \vdash
  \frac{n-m_{0}}{2}}  c(1,m_{1},m_0)c(2,m_{2},m_0+2\cdot m_1)
\cdots .
\end{equation}
Observe that the recurrence in \eqref{e:rec.f} gives rise to the expansion
$$
r(h,n,s) = \sum_{(m_h, m_{h+1}, \ldots)\vdash n}
c(h,m_{h},s)
c(h+1,m_{h+1},s+m_h\cdot 2^h)
  c(h+2,m_{h+2},s+m_h\cdot 2^h+ m_{h+1}\cdot 2^{h+1})
  \cdots
  $$
\noindent
  where the sum is over binary partitions of $n$ but the indexing is
  shifted so $m_h$ is the number of parts of size 1.
Thus,
$$\ttt_n =   \sum_{\substack{ m=0\\ (n-m)\ \mathrm{even}
}}^{n} c(0,m,0)\ r\!\left(1,\frac{n-m}{2},m\right)= r(0,n,0)
$$
which completes the proof since $f^2(n)=(2n-1)^2$.
\end{proof}

\bigskip

We can extend the functions above to count tangled chains:
\begin{equation}\label{e:factor.k}
f^k(s) := (2s-1)^{k},
\end{equation}

\begin{equation}\label{e:contrib.k}
c^{k}(h,m,s) := \prod_{j=1}^{m} \frac{f^k(s + j\cdot 2^{h})}{j\cdot 2^{h}},
\end{equation}
and
\begin{equation}\label{e:rec.f.k}
r^{k}(h,n,s) :=  \sum_{\substack{ m=0\\ (n-m)\ \mathrm{even}
}}^{n} c^{k}(h,m,s)\  r\!\left( h+1,\frac{n-m}{2},s+m2^h\right)
\end{equation}
with base cases
\begin{equation}\label{e:contrib.0.k}
c^{k}(h,0,s) =r^{k}(h,0,s)= 1.
\end{equation}

Then a proof very similar to the case $k=2$ also proves the following statement.

\begin{cor}\label{cor:rec.k}
For $n\geq 1$,   the number of tangled chains of length $k$ is $$\frac{r^{k}(0,n,0)}{f^k(n)}$$
which can be computed recursively using \eqref{e:rec.f.k}.
\end{cor}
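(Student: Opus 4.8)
The plan is to follow the proof of Lemma~\ref{lem:rec} line for line, replacing the exponent $2$ by $k$ and checking that nothing breaks. The point is that $k$ enters only through $f^k$, hence only through $c^k$ and $r^k$, while all the combinatorial bookkeeping on binary partitions is insensitive to $k$.

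First I would set $\ttt(k,n) := f^k(n)\,t(k,n) = (2n-1)^k\,t(k,n)$. Since $\lambda_1+\cdots+\lambda_{\ell(\lambda)}=n$ for every binary partition $\lambda$ of $n$, the factor $(2n-1)^k$ is exactly the missing $i=1$ term, so Theorem~\ref{thm:tchains} gives
\begin{equation*}
\ttt(k,n) = \sum_\lambda \frac{\prod_{i=1}^{\ell(\lambda)}\left(2(\lambda_i+\cdots+\lambda_{\ell(\lambda)})-1\right)^k}{z_\lambda},
\end{equation*}
the sum ranging over binary partitions of $n$. I would then factor each summand according to part size. Writing $s^\lambda_h=\sum_{i<h}m_i 2^i$ for the sum of the parts smaller than $2^h$, a one-line index computation shows that the $j$-th part of size $2^h$ (counted from the smallest) has tail sum $j\cdot 2^h+s^\lambda_h$; thus the size-$2^h$ parts contribute the numerator $\prod_{j=1}^{m_h} f^k(s^\lambda_h+j\,2^h)$, while their contribution to $z_\lambda$ is $\prod_{j=1}^{m_h} j\,2^h=2^{h m_h}m_h!$. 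Together these are exactly $c^k(h,m_h,s^\lambda_h)$, so $\ttt(k,n)=\sum_{m^\lambda\vdash n}\prod_{h\geq 0}c^k(h,m_h,s^\lambda_h)$, which is the $k=2$ display with $c$ replaced by $c^k$.

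Finally I would peel off the parts of size $1$ using the (entirely $k$-independent) bijection between binary partitions of $n$ with exactly $m$ parts equal to $1$ and binary partitions of $(n-m)/2$, and observe that unrolling \eqref{e:rec.f.k} yields
\begin{equation*}
r^k(h,n,s) = \sum_{(m_h,m_{h+1},\ldots)\vdash n} c^k(h,m_h,s)\,c^k(h+1,m_{h+1},s+m_h 2^h)\cdots,
\end{equation*}
with the indexing shifted so that $m_h$ counts the parts of size $1$. Specializing to $h=0$, $s=0$ and comparing with the factorization above gives $r^k(0,n,0)=\ttt(k,n)=f^k(n)\,t(k,n)$, which is the asserted identity.

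I do not expect a real obstacle, since the argument is a transcription of the $k=2$ case; the only content is the remark that $k$ sits passively inside $f^k$. The single point that genuinely needs care is the opening normalization: one must multiply by $(2n-1)^k$, i.e.\ by $f^k(n)$, and not by $(1-2n)^k$, because for odd $k$ these differ by a sign $(-1)^k$, whereas the extra $i=1$ factor supplied by Theorem~\ref{thm:tchains} is unambiguously $(2n-1)^k$.
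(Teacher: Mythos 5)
Your proof is correct and is precisely the argument the paper intends: it transcribes the proof of Lemma~\ref{lem:rec} with the exponent $2$ replaced by $k$ throughout, which is exactly what the paper itself offers (``a proof very similar to the case $k=2$''). Your closing remark that the normalization must be by $(2n-1)^k = f^k(n)$ rather than by $(1-2n)^k$ is a sensible refinement of the $k=2$ write-up, where that sign is invisible.
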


\section{Final remarks}\label{sec:remarks}

\subsection*{Generating functions}

%Given the explicit formulas for inequivalent binary trees, tanglegrams,
%and tangled chains from Theorems~\ref{thm:1}, \ref{thm:trees} and \ref{thm:tchains}, it is natural to
%ask for generating functions as well.

%\bigskip
%\noindent
%\textbf{Question} Does there exists a closed form for the generating
%function for each of these objects?

% Since the generating function for inequivalent trees is known to satisfy an equation, I commented out the above, I think the following is more appropriate. I also added a question environment. (MK, June 9)

It is known (and easy to prove) that the ordinary generating function for inequivalent trees satisfies the functional equation
$$B(x) = x + \frac 1 2 \left(B(x)^2 + B(x^2)\right).$$
This is, of course, equivalent to a recurrence for the sequence $b_n$. Given that in this paper we prove both explicit formulas and recurrences for the numbers of tanglegrams and tangled chains, it makes sense to ask the following.

\begin{question}
 Does there exist a closed form or a functional equation for the generating function of tanglegrams or tangled chains?
\end{question}

\subsection*{Number of cherries and other subtrees}

Cherries play an important role in the literature on tanglegrams.  For
example, Charleston's analysis \cite[pp. 325--326]{charleston} suggests
the following question.

\begin{question}
What is the expected number of matched cherries in a
random tanglegram?
\end{question}
%%SB question:  MK can you compute this number for n=42 or get some data?
%%Actually I used random generation of tanglegrams to get that. Do you think the next sentence is enough? I also corrected random tanglegram -> left tree in a random tanglegram (MK, June 9)

Computer experiments with random tanglegram generation suggest that the following is true.

\begin{conj}
The expected number of cherries in the left tree in a random tanglegram converges to $n/4$.
\end{conj}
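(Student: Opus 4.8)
The plan is to compute the expected number of cherries in the left tree of a uniformly random tanglegram by exploiting the distribution on trees that Algorithm~\ref{alg4} (and the left marginal of Algorithm~\ref{alg3}) induces. Since the left and right trees of a tanglegram are coupled only through the shared partition $\lambda$, I first need to record that in the uniform distribution on $T_n$ the left tree $T$ appears with probability
\[
P(T) = \frac{1}{t_n}\sum_\lambda z_\lambda\, \q_\lambda^2 \cdot \frac{|A(T)_\lambda|}{|A(T)|\,\q_\lambda} \cdot \sum_{S}\frac{|A(S)_\lambda|}{|A(S)|},
\]
which by Proposition~\ref{prop} simplifies to $P(T)=\frac{1}{t_n}\sum_\lambda z_\lambda\,\q_\lambda^2\,\frac{|A(T)_\lambda|}{|A(T)|\,\q_\lambda}$. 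Thus the marginal weight on $T$ is $w(T)=\sum_\lambda z_\lambda \q_\lambda \frac{|A(T)_\lambda|}{|A(T)|}$, and the expected number of cherries is $E_n = \frac{1}{t_n}\sum_{T\in B_n} \mathrm{cherries}(T)\, w(T)$. The first step is therefore to convert the cherry count into a sum over trees weighted by these $A(T)$-statistics.

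Next I would set up a generating-function or recursive bookkeeping for $\sum_{T} \mathrm{cherries}(T)\,|A(T)_\lambda|/|A(T)|$ that mirrors the recursion \eqref{eq:p.rec} used to prove Proposition~\ref{prop}. The key idea is that a cherry is a subtree with exactly two leaves, and the number of cherries is additive under the two-way decomposition $T=(T_1,T_2)$: $\mathrm{cherries}(T) = \mathrm{cherries}(T_1) + \mathrm{cherries}(T_2) + \llbracket T_1=T_2 \text{ are single leaves}\rrbracket$. I would introduce the refined sum $P_\lambda := \sum_{T\in B_n}\frac{|A(T)_\lambda|}{|A(T)|}$ (known to equal $\q_\lambda$) and a companion sum $Q_\lambda := \sum_{T}\mathrm{cherries}(T)\,\frac{|A(T)_\lambda|}{|A(T)|}$, then derive a recursion for $Q_\lambda$ from \eqref{eq:p.rec} by differentiating the additivity relation through the same $T_1>T_2$ versus $T_1=T_2$ case split. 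This yields a linear recursion expressing $Q$ in terms of $Q$'s on smaller partitions and the already-solved $\q_\lambda$'s, with the cherry-creation term contributing only when the two subtrees are single-leaf trees (i.e.\ at $\lambda$ having a suitable pair of parts equal to $1$ or $2$).

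I would then assemble $E_n t_n = \sum_\lambda z_\lambda \q_\lambda\, Q_\lambda$ and evaluate the leading asymptotics, reusing the machinery of Corollary~\ref{asymptotic}: rewrite each binary partition as $\mu 1^{n-|\mu|}$, isolate the dominant contribution (the all-$1$'s and the $2^k$-type partitions), and show that after dividing by $t_n$ the dominant balance forces $E_n \to n/4$. The expected heuristic is that a uniformly random planar binary tree already has asymptotically $n/4$ cherries, and the $A(T)$-weighting that turns planar trees into the tanglegram distribution perturbs this only at lower order. The main obstacle, and the step I expect to be genuinely delicate, is the asymptotic analysis of $\sum_\lambda z_\lambda \q_\lambda Q_\lambda / t_n$: one must control the $Q_\lambda$ recursion uniformly across all binary partitions and show that the cherry-creation terms, summed against the tanglegram weights $z_\lambda \q_\lambda^2$, contribute exactly the constant $1/4$ in the limit while all correction terms vanish. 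Establishing the necessary uniform estimates on $Q_\lambda$ — rather than just on $\q_\lambda$ — is where the real work lies, and it is presumably why the statement is posed as a conjecture rather than a theorem.
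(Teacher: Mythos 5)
The statement you are addressing is not proved in the paper at all: it appears there as a conjecture, supported only by computer experiments with the random generation algorithms of Section~\ref{sec:random}, so there is no proof of record to compare yours against. Your submission is likewise not a proof but a programme, and you say as much in your final sentence. The setup is mostly sound: the marginal weight of the left tree under the uniform tanglegram measure is indeed $w(T)=\sum_\lambda z_\lambda \q_\lambda |A(T)_\lambda|/|A(T)|$ (your first display for $P(T)$ carries a spurious extra factor $\sum_S |A(S)_\lambda|/|A(S)| = \q_\lambda$, but the simplified form you then work with is the correct one), and the additivity $\mathrm{cherries}(T)=\mathrm{cherries}(T_1)+\mathrm{cherries}(T_2)+\llbracket T_1,T_2 \text{ both single leaves}\rrbracket$ is correct and compatible with the $T_1>T_2$ versus $T_1=T_2$ case split underlying \eqref{eq:p.rec}.

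The gap is that everything after the setup is deferred. You never write down the recursion for $Q_\lambda=\sum_T \mathrm{cherries}(T)\,|A(T)_\lambda|/|A(T)|$, never solve it or extract even its leading-order behaviour, so the claim that the dominant balance forces $E_n\to n/4$ is an assertion rather than a derivation. The heuristic you invoke --- that the $\lambda=1^n$ term reduces to the uniform distribution on leaf-labeled binary trees, for which the expected number of cherries is known to be asymptotically $n/4$ --- is the right intuition for why the conjecture should hold, but the whole difficulty lies in the remaining terms: the partitions $2^k1^{n-2k}$ are \emph{not} individually negligible relative to $t_n$ (they are exactly what produces the factor $e^{1/8}$ in Corollary~\ref{asymptotic}), so one must show that $\sum_{\lambda\neq 1^n} z_\lambda \q_\lambda Q_\lambda$ contributes cherries at the same rate $n/4$ rather than shifting the constant. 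That requires asymptotics for $Q_\lambda/\q_\lambda$ uniform over binary partitions, which you explicitly leave open. As it stands, this is a reasonable plan for upgrading the conjecture to a theorem, not a proof of it.
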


\begin{conj}
  The expected number of copies of the tree $T$ in the left tree of a random tanglegram of size $n$ is asymptotically equal to $2^{-(l+k-1)}n$, where $l$ is the number of leaves of $T$ and $k$ is the number of symmetries of $T$, i.e.\ vertices with identical subtrees.
\end{conj}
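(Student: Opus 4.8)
The plan is to compute the expectation exactly as a ratio of two sums over binary partitions, evaluate the leading term coming from $\lambda=1^n$, and then show that all remaining partitions contribute the same relative correction factor $e^{1/8}$ that already governs $t_n$. First I would set $N_T(B)$ for the number of internal vertices of $B$ whose subtree is isomorphic to $T$, and $Q^T_\lambda=\sum_{B\in B_n}N_T(B)\,|A(B)_\lambda|/|A(B)|$. Running the derivation of \eqref{eqn:tn.form.1} with the extra weight $N_T(B)$ attached to the left tree and applying Proposition~\ref{prop} to the right tree gives
$$\mathbb{E}[N_T]=\frac{\sum_\lambda z_\lambda \q_\lambda\, Q^T_\lambda}{\sum_\lambda z_\lambda \q_\lambda^2},$$
the sums running over binary partitions of $n$. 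Writing $\rho_\lambda=Q^T_\lambda/\q_\lambda$, this is exactly $\mathbb{E}_\Lambda[\rho_\Lambda]$ for $\Lambda$ distributed as in Algorithm~\ref{alg3} (so $\Pr[\Lambda=\lambda]=z_\lambda\q_\lambda^2/t_n$), where $\rho_\lambda=\mathbb{E}[N_T]$ under the tree output by Algorithm~\ref{alg2} on input $\lambda$; thus the identity is merely the tower property for the left tree of Algorithm~\ref{alg3}.

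The heart of the matter is the term $\lambda=1^n$. Since $|A(B)_{1^n}|=1$, Algorithm~\ref{alg2} on input $1^n$ outputs $B$ with probability $(2^{n-1}/|A(B)|)/\cat_{n-1}$, i.e.\ $B$ is the unordered shape of a uniformly random planar binary tree. A grafting bijection, contracting a copy of $T$ to a single leaf, shows $\sum_{B}N_T(B)\,(2^{n-1}/|A(B)|)=\cat_{n-l}\,(n-l+1)\cdot 2^{l-1}/|A(T)|$, since a planar tree with a marked internal vertex carrying shape $T$ is the same data as a planar tree on $n-l+1$ leaves ($\cat_{n-l}$ of them), a chosen leaf ($n-l+1$ of them), and a planar shape of $T$ ($2^{l-1}/|A(T)|$ of them). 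Hence
$$\rho_{1^n}=\frac{\cat_{n-l}\,(n-l+1)\,2^{l-1}/|A(T)|}{\cat_{n-1}}\sim \frac{n}{4^{l-1}}\cdot\frac{2^{l-1}}{|A(T)|}=\frac{n}{2^{l-1}|A(T)|}=2^{-(l+k-1)}n,$$
using $\cat_{n-l}/\cat_{n-1}\to 4^{-(l-1)}$ and $|A(T)|=2^{k}$. So the $\lambda=1^n$ term already produces the conjectured constant, and it coincides with the expected number of copies of $T$ in a uniformly random planar binary tree.

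It remains to show the other partitions do not change the leading order. As in the proof of Corollary~\ref{asymptotic}, write $\lambda=\mu 1^{n-|\mu|}$ with all parts of $\mu$ at least $2$; the distribution of $\Lambda$ concentrates on $|\mu|=O(1)$ (the number of parts equal to $2$ being asymptotically Poisson$(1/8)$, with parts $\geq 4$ asymptotically negligible), and $\sum_\lambda z_\lambda\q_\lambda^2\sim e^{1/8}\,z_{1^n}\q_{1^n}^2$. The key lemma I would prove is the matching statement for the numerator, namely
$$\rho_{\mu 1^{n-|\mu|}}=\frac{Q^T_{\mu 1^{n-|\mu|}}}{\q_{\mu 1^{n-|\mu|}}}\sim \rho_{1^n}\sim \frac{n}{2^{l-1}|A(T)|}$$
uniformly over $\mu$ of bounded size, with enough decay to control the tail. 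Granting this, $\sum_\lambda z_\lambda\q_\lambda Q^T_\lambda=\sum_\lambda z_\lambda\q_\lambda^2\rho_\lambda\sim \rho_{1^n}\sum_\lambda z_\lambda\q_\lambda^2$, the $e^{1/8}$ factors cancel in the ratio, and $\mathbb{E}[N_T]\sim\rho_{1^n}\sim 2^{-(l+k-1)}n$. To establish the lemma I would show that $Q^T_\lambda$ obeys the same Catalan-type recursion \eqref{eq:p.rec} as $\q_\lambda$, but seeded at size $l$ with boundary value $|A(T)_\lambda|/|A(T)|$ in place of the single-leaf seed of $\q_\lambda$; equivalently, by the grafting picture, $Q^T_\lambda$ is the $\q$-type sum for trees on $n-l+1$ leaves carrying one distinguished ``super-leaf'' of shape $T$. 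Feeding this seeded recursion through the asymptotic machinery of Corollary~\ref{asymptotic} should reproduce the same $\mu$-dependence as in $\q_\lambda$ up to lower order. A coupling argument, showing that Algorithm~\ref{alg2} on inputs $\mu 1^{n-|\mu|}$ and $1^n$ produces trees with asymptotically identical fringe-subtree statistics because they differ only in an $O(|\mu|)$-sized region, would also give the lemma.

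The main obstacle is precisely this uniformity lemma, and within it the interaction between the symmetries of $T$ and the non-unit parts of $\mu$: a marked automorphism of type $\mu 1^{n-|\mu|}$ can act nontrivially inside a copy of $T$, so $|A(B)_\lambda|$ for $B$ containing $T$ does not factor cleanly into a ``$T$ part'' and a ``rest'' part, and this is exactly where the exponent $k$ (through $|A(T)|=2^k$) must be tracked. Controlling these cross terms with errors small enough to survive summation over all $\mu$ — including the atypical large-$\mu$ tail, whose $\rho_\lambda$ need only be \emph{bounded} rather than estimated — is the delicate step. I expect this is why the statement is recorded as a conjecture: the leading constant is forced by the clean $\lambda=1^n$ computation above and matches the planar-tree heuristic, but a fully uniform control of the remaining partitions, together with the apparent absence of an exact closed form, keeps a complete proof out of reach here.
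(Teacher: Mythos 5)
First, a point of comparison: the paper does not prove this statement — it is recorded as a conjecture supported only by computer experiments with random tanglegram generation — so there is no proof of record to measure yours against. Your reduction is nonetheless a sensible and partially correct attack. The exact formula $\mathbb{E}[N_T]=\bigl(\sum_\lambda z_\lambda \q_\lambda Q^T_\lambda\bigr)/\bigl(\sum_\lambda z_\lambda \q_\lambda^2\bigr)$ does follow from the derivation of \eqref{eqn:tn.form.1} with the weight $N_T$ inserted on the left factor, and your evaluation of the $\lambda=1^n$ term is correct: since $2^{n-1}/|A(B)|$ counts planar embeddings of $B$, the grafting bijection gives $Q^T_{1^n}=\cat_{n-l}\,(n-l+1)\,2^{l-1}/(2^{n-1}|A(T)|)$, and with $|A(T)|=2^k$ (from Jordan's theorem) this yields $\rho_{1^n}\sim 2^{-(l+k-1)}n$, exactly the conjectured constant. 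So the skeleton is sound and the constant is forced by the dominant term.

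The genuine gap is the one you name yourself: the uniformity statement $\rho_{\mu 1^{n-|\mu|}}\sim\rho_{1^n}$, with enough control to sum against the measure $z_\lambda\q_\lambda^2/t_n$ (whose mass off $\lambda=1^n$ is a nonvanishing $1-e^{-1/8}$ fraction), is asserted rather than proved; without it the argument shows only that \emph{if} the remaining partitions behave like $1^n$ then the conjecture holds. Both routes you sketch for closing it are incomplete in essential ways. For the recursion route, $N_T(B)=N_T(B_1)+N_T(B_2)+\llbracket B\cong T\rrbracket$ does give a pointed analogue of \eqref{eq:p.rec}, but it is an inhomogeneous linear recursion in $Q^T_\lambda$ with $\q$-coefficients and a boundary term $|A(T)_\lambda|/|A(T)|$ at size $l$; extracting uniform asymptotics from it means redoing the analysis of Corollary~\ref{asymptotic} for this inhomogeneous system, which is precisely the missing work, and the boundary term is exactly where the interaction between the non-unit parts of $\mu$ and the symmetries of $T$ (your ``cross terms'') enters. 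The coupling route is only a heuristic: Algorithm~\ref{alg2} on input $\mu 1^{n-|\mu|}$ is not obviously a local perturbation of the $1^n$ case, because the decision of how the non-unit parts are split propagates down the entire recursion. The tail bound you defer is easy by comparison, but useless without the bulk estimate. In short: correct framework, correct leading constant, but the step that would turn the conjecture into a theorem remains open — consistent with the paper leaving it as a conjecture.
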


%%% SB question: MK can you make that more specific? Say N_T(G) is the
%%% number of copies of T in the tanglegram G.  Is it that [N_T(G)-E(N_T)]/\sigma(N_T) converges to 0,1 normal?   Can you get a conjecture for \sigma(N_T).
%% I was hoping this was doable, but I couldn't be sure about the sigmas, not with the data I was getting with random generation. (MK, June 9)

It also seems that the number of copies of a tree converges to a normal distribution.

%\textbf{Question.}  Make this concrete.  See comments in tex file.
%I commented this out because I don't know what sigma should be. Or do you mean something else by concrete? (MK, June 9)

\bigskip

% I corrected the following conjecture: It should not be a tree in B_n chosen uniformly, but the left tree of a random tanglegram. Also, the sum is not over trees in B_n, but over all possible trees. (MK, June 10)

%\begin{conj}
%  Let $T' \in B_n$ be chosen uniformly. The expected number of
%  generators of $A(T')$ is asymptotically equal to
% $$\left( \sum_{T \in B_n} \frac 1{4^{l(T)+k(T)}} \right) n = f({\textstyle \frac 1 4}) \cdot n.$$
%\end{conj}

If the conjectures hold, then for every tree $T$ with $l$ leaves and $k$ symmetries, the number of copies of the tree with $T$ as left and as right subtree in the left tree of a randomly chosen tanglegram asymptotically equals $2^{-(2l+(2k+1)-1)}n = 4^{-(l+k)} n$. So that would imply the following.

\begin{conj}
  Let $T' \in B_n$ be the left tree of a tanglegram chosen uniformly at random. The expected number of
  generators of $A(T')$ is asymptotically equal to
 $$\left( \sum_{T \in B_n} \frac 1{4^{l(T)+k(T)}} \right) n.$$
\end{conj}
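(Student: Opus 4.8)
The plan is to reduce this statement to the copy-counting conjecture (the second conjecture above) by a purely deterministic identity followed by linearity of expectation. The first step is to pin down the meaning of ``number of generators of $A(T')$'': by Jordan's theorem $A(T')$ is an iterated direct and wreath product of copies of $\Z_2$, and it is generated by the subtree-flips $\pi$ attached to the internal vertices whose two subtrees are isomorphic, one flip per such vertex. With this natural generating set the number of generators equals $k(T')$, the number of symmetric vertices of $T'$. The key combinatorial observation is that symmetric vertices are in bijection with embedded copies of doubled trees: a vertex $v$ of $T'$ is symmetric with both of its children-subtrees equal to a fixed tree $T$ if and only if the subtree rooted at $v$ is a copy of the tree having $T$ as both its left and right subtree, which I denote $(T,T)$. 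Writing $N_S(T')$ for the number of copies of $S$ in $T'$, this gives the deterministic identity
$$k(T') = \sum_{T} N_{(T,T)}(T'),$$
where the sum is over all binary trees $T$; only finitely many contribute, since $(T,T)$ has $2\,l(T)$ leaves and $N_{(T,T)}(T')=0$ as soon as $2\,l(T)>n$.

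The second step is to take expectations over the uniformly random left tree $T'$ and apply linearity, giving $\mathbb{E}[k(T')] = \sum_{T} \mathbb{E}[N_{(T,T)}(T')]$. The doubled tree $(T,T)$ has $2\,l(T)$ leaves and $2\,k(T)+1$ symmetries (the $2k(T)$ symmetric vertices coming from the two copies of $T$, plus its root), so the copy-counting conjecture predicts
$$\mathbb{E}[N_{(T,T)}(T')] \sim 2^{-(2l(T)+(2k(T)+1)-1)}\,n = 4^{-(l(T)+k(T))}\,n,$$
which is exactly the estimate recorded in the paragraph preceding the statement. Summing these asymptotics term by term and using that $\sum_{T} 4^{-(l(T)+k(T))}$ converges — it is dominated by $\sum_{m\ge 1} b_m\,4^{-m}$, which converges because the Wedderburn--Etherington numbers grow at a rate strictly below $4$ — would yield $\mathbb{E}[k(T')] \sim \left(\sum_T 4^{-(l(T)+k(T))}\right) n$, as claimed.

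The main obstacle is that this argument is only as strong as the copy-counting conjecture, and even granting that conjecture one cannot simply add up the per-tree asymptotics. The number of summands in $\sum_T \mathbb{E}[N_{(T,T)}(T')]$ grows with $n$, so interchanging the infinite sum over $T$ with the limit $n\to\infty$ requires a uniform bound, for instance an estimate of the form $\mathbb{E}[N_{(T,T)}(T')] \le C\cdot 4^{-(l(T)+k(T))}\,n$ valid for all $n$ and all $T$, together with the convergence of $\sum_T 4^{-(l(T)+k(T))}$ established above. With such a bound one could invoke dominated convergence for series to pass the limit inside the sum and to control the tail coming from trees $T$ with many leaves. Producing this \emph{uniform} estimate, rather than merely the fixed-$T$ asymptotic, is the genuinely hard analytic step and is the reason the statement remains conjectural; it would presumably follow from a suitably uniform strengthening of the copy-counting conjecture, whose proof would in turn likely rest on the exact description of the random-tanglegram distribution furnished by Proposition~\ref{prop} and the random-generation algorithms.
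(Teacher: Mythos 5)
This statement is a conjecture in the paper, not a theorem: the authors give no proof, only the one-line heuristic that it would follow from their copy-counting conjecture because a symmetric vertex of $T'$ whose two subtrees both equal $T$ is the same thing as an embedded copy of the doubled tree with $T$ as both subtrees, which has $2l(T)$ leaves and $2k(T)+1$ symmetries and hence conjectured expected count $2^{-(2l(T)+(2k(T)+1)-1)}n=4^{-(l(T)+k(T))}n$. Your reduction is exactly this one, and you are right --- and more careful than the paper --- that even granting the copy-counting conjecture one must still justify interchanging the infinite sum over $T$ with the limit $n\to\infty$, which requires a bound on $\mathbb{E}[N_{(T,T)}(T')]$ uniform in $T$ and $n$; no such bound appears anywhere in the paper, consistent with the statement being left open. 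So your proposal faithfully reproduces the paper's (non-)argument and correctly identifies why it is not a proof; nothing further could be checked against the source, since the source proves nothing here.
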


It is not hard to see that the sum in the conjecture equals  $f({\textstyle \frac 1 4}) n$, where $f(x)$ is the function defined by $f(0) = 0$ and $f(x) = x + \frac
1 2 f(x)^2 + (x-\frac 1 2)f(x^2)$, or explicitly
 $$f(x) = 1 - \sqrt{1 - 2x + (1-2x)\left(1 - \sqrt{1 - 2x^2 + (1-2x^2)\left(1 - \sqrt{1-2x^4 + \cdots}\right)}\right)}.$$

Note that the computation of $f(\frac 1 4) = 0.27104169360883278703...$ converges very rapidly:
the number of correct digits roughly doubles after each step.

%%% See p. 325-326
%%https://books.google.com/books?id=_fAQGEJobT0C&pg=PA303&lpg=PA303&dq=Michael+A.+Charleston,+%22Recent+Results+in+Cophylogeny+Mapping%22&source=bl&ots=XhBxcxze6M&sig=Vm6VIVPmx46aQnEZONmzwRtWDKA&hl=en&sa=X&ei=V1BrVcDbIYWoogTb8YGQBQ&ved=0CDoQ6AEwBQ#v=onepage&q=Michael%20A.%20Charleston%2C%20%22Recent%20Results%20in%20Cophylogeny%20Mapping%22&f=false

%\subsection*{Asymptotics of the number of inequivalent binary trees}

%The expression for the number of inequivalent binary trees given in
%Theorem~\ref{thm:trees}
%% Corollary~\ref{cor}
%% todo: MK what corollary is the line above refering to?  And, what is $\xi$? (SB)
% I've given this some thought and I don't think it's worth including. My point here would be that you can try to figure out for what lambda you have the highest contribution to b_n. The corresponding contribution seems to be proportional to the asymptotics ob b_n given here: http://mathworld.wolfram.com/WeaklyBinaryTree.html or here: http://people.inf.elte.hu/fekete/algoritmusok_msc/kombinatorika_fak/Otter-Trees.pdf.
%In other words, this would be a complicated, long subsection on something that is known and proved, and would not even give a new proof, just provide a hint why it holds. So I don't think we want this. Thoughts? (MK, June 10)
% fine by me (SB, June 17).
%is not as amenable to the study of asymptotics as
%the one in Theorem~\ref{thm:1}. It does, however, give a hint as to why
%the rate of growth is what it is.

\medskip

\subsection*{Connection with symmetric functions}

The main theorems suggest that symmetric functions might be at play; note, for example, the similarity with the formula $h_n = \sum_\lambda z_\lambda^{-1} p_\lambda$, where $h_n$ is the homogeneous symmetric function, $p_\lambda$ the power sum symmetric function, and the sum is over all partitions of $n$.

\begin{question}
 Is there a connection between tanglegrams (or more generally tangled chains) and symmetric functions?
\end{question}

\begin{remark}
Based on a manuscript version of this paper, Ira Gessel pointed out
that there is indeed a connection between symmetric functions and the
enumeration of the ordered and unordered tanglegrams based on the
theory of species.  His claims will be spelled out in a forthcoming
paper \cite{gessel.2015}.
\end{remark}

\subsection*{Variants on tanglegrams}

Tanglegrams as described here fit in a set of more general setting of
pairs of graphs with a bijection between certain subsets of the
vertices (more completely described and motivated in
\cite{arniePaper}).  One can also consider \emph{unordered
  tanglegrams} by identifying $(T,v,S)$ with $(S,v^{-1}, T)$.  For
example, the 4th and 5th tanglegrams in Figure~\ref{fig:t4} are
equivalent as unordered tanglegrams, and so are the 8th and 10th.
From this picture, the reader can verify that there are 10 unordered
tanglegrams of size 4.

Because of reversibility assumptions for the continuous time Markov
mutation models commonly used to reconstruct phylogenetic trees,
unrooted trees are the most common output of phylogenetic inference
algorithms.  Thus another variant of tanglegrams involves using
unrooted trees in place of rooted ones.  The motivation for studying
these variants comes from noting that many problems in computational
phylogenetics such as distance calculation between trees
\cite{Allen2001-yz} ``factor'' through a problem on tanglegrams.

\begin{question}
Is there a nice formula for the number of
\begin{itemize}
\item unordered binary rooted tanglegrams,
\item ordered binary unrooted tanglegrams, or
\item unordered binary unrooted tanglegrams?
\end{itemize}
\end{question}
These counts have been found up to 9 leaves (Table~\ref{tab:counts}) by direct enumeration of double cosets \cite{arniePaper}.

\begin{table}[h]
\centering
\caption{The number of tanglegrams of various types up to 9 leaves.}
\label{tab:counts}
\begin{tabular}{lllll}
leaves & rooted ord. & rooted unord. & unrooted ord. & unrooted unord. \\
\hline
1      & 1             & 1            & 1               & 1              \\
2      & 1             & 1            & 1               & 1              \\
3      & 2             & 2            & 1               & 1              \\
4      & 13            & 10           & 2               & 2              \\
5      & 114           & 69           & 4               & 4              \\
6      & 1509          & 807          & 31              & 22             \\
7      & 25595         & 13048        & 243             & 145            \\
8      & 535753        & 269221       & 3532            & 1875           \\
9      & 13305590      & 6660455      & 62810           & 31929
\end{tabular}
\end{table}

\section*{Acknowledgments}

We thank Ira Gessel, Arnold Kas, Jim Pitman,  Xavier G. Viennot, Paul Viola, Bianca Viray, and Chris Whidden for helpful
discussions.

\bibliographystyle{siam}
\bibliography{the}

\end{document}